\newlength{\defbaselineskip}
\newcommand{\setlinespacing}[1]%
           {\setlength{\baselineskip}{#1 \defbaselineskip}}
\theoremstyle{plain}
\newtheorem{thm}{Theorem}[section]
\newtheorem{cor}[thm]{Corollary}
\newtheorem{lem}[thm]{Lemma}
\theoremstyle{definition}
\newtheorem{defn}[thm]{Definition}
\newtheorem{rem}[thm]{Remark}
\numberwithin{equation}{section}
\begin{document}

\title{\Large\textbf{On Baer Invariants of Pairs of Groups}}

\author{\textbf{Zohreh Vasagh, Hanieh Mirebrahimi}\\ \textbf{and Behrooz Mashayekhy} \\
Department of Pure Mathematics,\\
Center of Excellence in Analysis on Algebraic Structures,\\
Ferdowsi University of Mashhad,\\
P.O.Box 91775-1159, Mashhad, Iran.\\
E-mail: zo$_{-}$va761@stu-mail.um.ac.ir,\\ h$_{-}$mirebrahimi@um.ac.ir, \\
bmashf@um.ac.ir}
\date{ }
\maketitle
\begin{abstract}
In this paper, we use the theory of simplicial groups to develop the Schur multiplier of a pair of groups $(G,N)$ to the Baer invariant of it, $\mathcal{V}M(G,N)$, with respect to an arbitrary variety $\mathcal{V}$. Moreover, we present among other things some behaviors of Baer invariants of a pair of groups with respect to the free product and the direct limit.
Finally we prove that the nilpotent multiplier of a pair of groups does commute with the free product of finite groups of mutually coprime orders.
\end{abstract}
\textit{Key Words}: Baer invariant; Pair of groups; Simplicial groups.\\
\textit{2000 Mathematics Subject Classification}: 57M07; 20J05; 55U10.\\

\section{Introduction and motivation}
Schur (1904), introduced the Schur multiplier of a group $G$, $M(G)$, by Projective representations of $G$ which the second integral homology group of $G$. The second homology plays the special role, this led H. Hopf (1942), to find an effective method for calculating it. Hopf's integral homology formula is identical to the Schur multiplier of a finitely presented group. R. Baer (1945) using the variety of groups, generalized the notion of the Schur multiplier of a group $G$ to the Baer invariant of it with respect to a variety $\mathcal{V}$, $\mathcal{V}M(G)$.

 In the present section, we outline the topological interpretations of the Schur multiplier and the Schur multiplier of a pair of groups, and a motivation to define the Baer invariant of a pair of groups with topological approach. Proofs, requiring the theory of simplicial groups, deferred to Section 2. Section 3 is devoted to the results that obtained from the long exact sequence of the Baer invariant of a pair of groups. Also we show that our definition of the Baer invariant of a pair of groups is a vast generalization of the Baer invariant of a special pair of groups which was defined by Moghaddam, Salemkar and Saany (2007). At the end of Section 3, we show that the Baer invariant of a pair of groups commutes with the direct limit. Also we obtain a long exact sequence that contains the Schur multiplier and the 2-nilpotent multiplier of a pair of groups. Computation of the Baer invariant of a pair of free products of groups is given in Section 4. Also in this section, we present an explicit formula for the 2-nilpotent multiplier of a pair of free products of groups.

 As convention, throughout the article we use $\mathcal{V}$ as an arbitrary variety of groups defined by a set of laws $V$. We note that for any group $G$ one can construct functorially a free simplicial group $K_.$, called \emph{free simplicial resolution} of $G$, whose $\pi_0(K_.)\cong G$, $\pi_m(K_.)\cong 0$ for $m\geq 1$, with $K_m$ is free group (see Duskin, 1975). Given a functor $T:(Groups)\rightarrow (Groups)$ we define left derived functors as
$$ L^T_m(G)= \pi_m\big(T(K_.)\big), \ \ m\geq 0.$$
The groups $L^T_m(G)$ are independent of the choice of the free simplicial resolution. For more details see for instance Inassaridze (1974).

The topological interpretation of the c-nilpotent multiplier of a group arose in the work of Burns and Ellis (1997), on the 2-nilpotent multiplier of the free product of groups. Burns and Ellis (1997) observed that there are natural isomorphisms
$$\frac{G}{\gamma_{c+1}(G)}\cong L^{\tau_c}_0(G)\cong\pi_0(\frac{K_.}{\gamma_{c+1}(K_.)})$$
  $$M^{(c)}(G)\cong L^{\tau_c}_1(G)\cong\pi_1(\frac{K_.}{\gamma_{c+1}(K_.)}),$$
where $K_.$ is a free simplicial resolution of $G$ and $\tau_c$ is the functor that sends $G$ to $G/\gamma_{c+1}(G)$. In Franco (1998), described the Baer invariant of a group $G$ in topological language as follows:
$$\frac{G}{V(G)}\cong L^{\tau_V}_0(G)\cong\pi_0(\frac{K_.}{V(K_.)})$$
 $$\mathcal{V}M(G)\cong L^{\tau_V}_1(G)\cong\pi_1(\frac{K_.}{V(K_.)}),$$
where $K_.$ is a free simplicial resolution of $G$ and $\tau_V$ is the functor that sends $G$ to $G/V(G)$ .

A group $G$ with a normal subgroup $N$ denoted by  $(G,N)$ is called a homomorphism of pairs $(G_1,N_1) \rightarrow~(G_2,N_2)$ is a group homomorphism $G_1 \rightarrow G_2$ that sends $N_1$ into $N_2$.

Ellis (1998), introduced the Schur multiplier of a pair $(G,N)$ as a functorial abelian group $M(G,N)$ whose feature is the following natural exact sequence
\begin{equation}
\begin{array}{ll}
\cdots&\rightarrow M(G,N)\rightarrow M(G)\rightarrow M(\frac{G}{N})\\
&\rightarrow \frac{N}{[N,G]}\rightarrow (G)^{ab} \rightarrow (\frac{G}{N})^{ab}\rightarrow 0.
\end{array}
\end{equation}

The natural epimorphism $G\rightarrow G/N$ implies the following exact sequence of free simplicial groups
$$1\rightarrow\ker (\alpha) \rightarrow K_. \stackrel{\alpha}{\rightarrow}L_.\rightarrow 1,$$
where $K_.$ and $L_.$ are free simplicial resolution of $G$ and $G/N$, respectively (see Franco, 1998). The short exact sequence of simplicial groups
\begin{equation}\label{valpha}1\rightarrow\ker (\frac{\alpha}{V(\alpha)}) \rightarrow \frac{K_.}{V(K_.)} \stackrel{\frac{\alpha}{V(\alpha)}}{\rightarrow} \frac{L_.}{V(L_.)}\rightarrow 1\end{equation}
gives rise a long exact sequence of homotopy groups as follows:
$$\begin{array}{ll}\cdots &\rightarrow\pi_1(\ker \frac{\alpha}{V(\alpha)}) \rightarrow \pi_1(\frac{K_.}{V(K_.)}) {\rightarrow} \pi_1(\frac{L_.}{V(L_.)})\\&\rightarrow \pi_{0}(\ker \frac{\alpha}{V(\alpha)}) \rightarrow \pi_{0}(\frac{K_.}{V(K_.)}) {\rightarrow} \pi_{0}(\frac{L_.}{V(L_.)})\rightarrow 0.\end{array}$$
Franco (1998), proved that $\pi_0(\ker \frac{\alpha}{V(\alpha)})\cong \frac{N}{[NV^*G]}$. Using some isomorphisms we can rewrite the above long exact sequence as follows:
\begin{equation}\label{long}\begin{array}{ll}\cdots &\rightarrow\pi_1(\ker \frac{\alpha}{V(\alpha)}) \rightarrow \mathcal{V}M(G){\rightarrow} \mathcal{V}M(G/N)\\&\rightarrow \frac{N}{[NV^*G]} \rightarrow \frac{G}{V(G)} {\rightarrow} \frac{G/N}{V(G/N)}\rightarrow 0.\end{array}\end{equation}
Indeed Franco obtained the Fr\"{o}hlich long exact sequence independent of the method of Fr\"{o}hlich (1963).

 Now, we define the Baer invariant of a pair of groups $(G,N)$ as follows:
  $$\mathcal{V}M(G,N)=\pi_1\big(\ker( \frac{\alpha}{V(\alpha)})\big).$$
\section{Preliminaries and notation}
 In this section we recall some basic notations and properties of simplicial groups which will be needed in the sequel. We refer the reader to Curtis (1971) or Georss and Jardine (1999) for further details.
\begin{defn}
A \emph{simplicial set} $K_.$ is a sequence of sets $K_0, K_1, K_2,\ldots$ together with maps $d_i:K_n\rightarrow K_{n-1}$ (faces) and $s_i:K_n\rightarrow K_{n+1}$ (degeneracies), for each $0\leq i\leq n$, such that the following conditions hold:
\[ \begin{array}{lcl}
  d_j d_i  &= & \ \ \ d_{i-1} d_j \ \ \quad\text{for $ j<i$} \\
   s_j s_i & =& \ \ \ s_{i+1} s_j \ \ \quad\text{for $ j\leq i$}\\
d_j s_i&=&\begin{cases}
s_{i-1} d_j  &\text{for $ j<i$;} \\
identity &\text{for $ j=i,i+1$;} \\
s_i d_{j-1}&\text{for $ j>i+1$.} \\
\end{cases} \end{array}\]
A \emph{simplicial map} $f:K_.\rightarrow L_.$ is a sequence of functions $f_n:K_n\rightarrow L_n$, with the following commutative diagram
\[
\begin{CD}
K_{n+1}    @<s_i<<  {K_n}  @>{d_i}>>  K_{n-1}\\
@Vf_{n+1}VV     @Vf_{n}VV         @VVf_{n-1}V\\
L_{n+1}    @<s_i<<  {L_n}  @>d_i>>  L_{n-1}
\end{CD}
\]
\end{defn}
Like topological spaces, the homotopy groups of simplicial sets is defined.
The category of simplicial sets and topological spaces can be related by two functors as follows:
\begin{itemize}
\item The \emph{geometric realization}, $|-|$, is the functor from the category of simplicial sets to the category of CW complexes.
\item The \emph{singular simplicial}, $S_*(-)$, is the functor from the category of topological spaces to the category of simplicial sets.
        \end{itemize}

A simplicial set $K_.$ is called a \emph{simplicial group} if each $K_i$ is group and all faces and degeneracies are homomorphisms.
There is a basic property of simplicial groups which due to Moore (1954--55) its homotopy groups $\pi_*(G_.)$ can be obtained as the 'homology of' a certain chain complex $(NG,\partial)$.
 \begin{defn}
If  $K_.$ is a simplicial group, then the \emph{Moore complex} $(NK_.,\partial)$ of $K_.$ is the (nonabelian) chain complex defined by $(NK)_n=\cap_{i=0}^{n-1}Ker d_i$ with $\partial_n:NK_n\rightarrow NK_{n-1}$ which is the restriction of $d_n$.
\end{defn}
A simplicial group $K_.$ is said to be \emph{free} if each $K_n$ is a free group and degeneracy homomorphisms $s_i$'s send the free basis of $K_n$ into the free basis of $K_{n+1}$.

\begin{thm}\label{sg} (see Curtis, (1971)).
\begin{enumerate}
\item For every simplicial group $K_.$ the homotopy group $\pi_n(K_.)$ is abelian even for $n=1$.\label{a}
\item Every epimorphism  between simplicial groups is a fibration.\label{f}
\item  Let $K_.$ be a simplicial group, then $\pi_*(K_.)\cong H_*(NK_.)$.\label{m}
\item $H_n\big(N(K_.\otimes L_.)\big)\cong H_n\big(N(K_.)\otimes N(L_.)\big).$ \label{n}
\end{enumerate}
\end{thm}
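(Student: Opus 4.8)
All four assertions are classical results of Moore and Kan, so the plan is to recall the standard arguments; these rest on two combinatorial facts about simplicial groups — that they satisfy the Kan extension condition, and that their homotopy is computed from the Moore complex. For the first statement, the quick route is to pass to geometric realizations: $|K_.|$ is a topological group, hence an H-space whose unit is the realization of the basepoint, and the fundamental group of an H-space based at its unit is abelian (loop concatenation and the pointwise product induced by the group multiplication agree up to homotopy, and the latter is visibly commutative). A purely simplicial proof is also available once part (3) is in hand: one identifies $\pi_1(K_.)$ with $Z_1(NK_.)/B_1(NK_.)$ and checks, using the simplicial identities, that the commutator of two $1$-cycles is the image under $\partial_2$ of an explicit element of $NK_2$ built from degeneracies, so the quotient is abelian.

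For the fibration statement, first establish the absolute Kan property: given a horn $(x_0,\dots,\widehat{x_k},\dots,x_n)$ in $K_{n-1}$ with $d_{j-1}x_i = d_i x_j$ for $i<j$ (indices $\neq k$), build a filler $w\in K_n$ by an inductive correction, at each step multiplying the current candidate by a degeneracy $s_j(\cdot)^{\pm1}$ that repairs one more face without disturbing those already matched. For the relative version, let $f\colon K_.\to L_.$ be an epimorphism, hence levelwise surjective; given a horn in $K_.$ and a filler $v\in L_n$ of its image, lift $v$ to some $\bar w\in K_n$. Then $y_i := x_i\,(d_i\bar w)^{-1}$ lies in $\ker f_{n-1}$ for $i\neq k$, and the simplicial identities show the $y_i$ form a horn in the simplicial subgroup $\ker f_.$, which is itself Kan; filling that horn by some $u\in\ker f_n$, the element $w = u\,\bar w$ satisfies $d_i w = x_i$ for $i\neq k$ and $f(w)=v$. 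Thus $f$ has the required lifting property.

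For Moore's theorem, fix the basepoint at the identity; since $K_.$ is Kan, $\pi_n(K_.)$ is the set of $w\in K_n$ with $d_i w = 1$ for all $i$, modulo simplicial homotopy rel $\partial\Delta^n$. Such a $w$ lies in $NK_n$ and satisfies $\partial_n w = d_n w = 1$, i.e.\ is a cycle of the Moore complex, and conversely; this matches the cycles. For boundaries, if $w,w'$ are two such simplices a homotopy between them is an $h\in K_{n+1}$ with $d_{n+1}h=w$, $d_n h=w'$ and the remaining faces degenerate; multiplying $h$ by suitable degeneracies of $w'$ one normalizes it into $NK_{n+1}$ with $\partial_{n+1}h = w(w')^{-1}$, so $w\sim w'$ exactly when $w(w')^{-1}\in B_n(NK_.)$. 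Comparing the group structures then gives $\pi_n(K_.)\cong H_n(NK_.)$.

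For the Eilenberg--Zilber isomorphism (statement (4)), in the setting where the relevant simplicial groups are abelian one has the shuffle map $NK_.\otimes NL_.\to N(K_.\otimes L_.)$ and the Alexander--Whitney map in the other direction, which are mutually inverse chain homotopy equivalences (on the left of $N(-)$ the symbol $\otimes$ is the levelwise tensor product, on the right it is the tensor product of chain complexes); since chain homotopy equivalent complexes have the same homology, $H_n(N(K_.\otimes L_.))\cong H_n(NK_.\otimes NL_.)$. I expect the main obstacle to be the bookkeeping in the fibration argument — arranging the inductive horn-filling so that each correction step preserves every previously matched face — together with the explicit normalization-by-degeneracies steps in parts (1) and (3); once those combinatorial lemmas are set up, the rest is formal.
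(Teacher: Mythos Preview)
Your sketches are faithful outlines of the standard proofs due to Moore and Kan, and I see no genuine gap in the arguments you indicate. However, there is nothing in the paper to compare them to: the paper does not prove Theorem~\ref{sg} at all. It is stated as background with the citation ``(see Curtis, (1971))'' and no proof environment follows; the four items are simply quoted as known facts and then invoked later (in Lemma~3.8, Theorem~3.9, and the computations of Section~4). So the discrepancy is not one of method but of intent --- the authors treat these results as imported, whereas you have supplied the classical arguments behind them. If you wish to include proofs in a write-up, what you have is fine; for matching the paper, a bare reference to Curtis suffices.
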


\section{Some properties of the Baer invariant of a pair of groups}
 In this section we study some behaviors of the Baer invariant of a pair of groups.
 Let $f:(G_1,N_1)\rightarrow(G_2,N_2)$ be a homomorphism of pairs of groups, then functorial property of free simplicial resolution yields the following diagram of free simplicial groups
$$
\begin{CD}\ker(\alpha_1)@>>>{K_{1_.}} @>\alpha_1>>{L_{1_.}}\\@VVV@VVV@VVV\\ \ker(\alpha_2) @>>> {K_{2_.}}@>{\alpha_2}>> {L_{2_.}},\end{CD}
$$
where $K_{i_.}$ and $L_{i_.}$ are the corresponding free simplicial resolution of $G_i, G_i/N_i$, respectively. Therefore we have the following commutative diagram
$$
\begin{CD}\ker\alpha_1/V(\alpha_1)@>>> K_{1_.}/V(K_{1_.}) @>\alpha_1/V(\alpha_1)>> L_{1_.}/V(L_{1_.})\\@V\gamma VV@VVV@VVV\\ \ker\alpha_2/V(\alpha_2)@>>> K_{._2}/V(K_{._2}) @>\alpha_2/V(\alpha_2)>> L_{._2}/V(L_{._2}).\end{CD}$$
By the above diagram we have $\pi_1(\gamma):\pi_1\big(\ker\alpha_1/V(\alpha_1)\big)\rightarrow\pi_1\big(\ker\alpha_2/V(\alpha_2)\big)$.
Indeed, we can state the following theorem.
\begin{thm}
The Baer invariant of a pair of groups is a functor from the category of pairs of groups to the category of abelian groups.
\end{thm}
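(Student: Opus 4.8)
The plan is to verify the two defining properties of a functor: that $\mathcal{V}M(-,-)$ assigns to each pair a well-defined abelian group, and that it respects composition and identities. The target of the assignment being abelian is already in hand: by definition $\mathcal{V}M(G,N)=\pi_1\big(\ker(\alpha/V(\alpha))\big)$, which is the first homotopy group of a simplicial group, and by Theorem~\ref{sg}(\ref{a}) every homotopy group of a simplicial group is abelian. So the content of the statement is really the functoriality of the construction on morphisms, and the preceding discussion has already produced the candidate map $\pi_1(\gamma)$ associated to a homomorphism of pairs $f:(G_1,N_1)\to(G_2,N_2)$.

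First I would address well-definedness of the object. The group $\mathcal{V}M(G,N)$ is built from a choice of free simplicial resolution $K_{.}$ of $G$ together with the induced resolution $L_{.}$ of $G/N$ and the surjection $\alpha:K_{.}\to L_{.}$; I need to know this is independent, up to canonical isomorphism, of these choices. This follows from the standard fact (already invoked in the excerpt for the left derived functors $L^T_m$) that free simplicial resolutions are unique up to homotopy equivalence, that the induced map on $G/N$ and hence $\alpha$ is determined up to the appropriate homotopy, and that applying the functor $\tau_V$ and taking kernels preserves these homotopies; then $\pi_1$ of the kernel is a well-defined abelian group. I would phrase this as: the assignment factors through the left derived functor machinery, so the independence is inherited.

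Next I would treat morphisms. Given $f:(G_1,N_1)\to(G_2,N_2)$, the functoriality of the free simplicial resolution gives the first commutative diagram in the section, hence, after applying $\tau_V$ levelwise, the second commutative diagram, in which the left-hand vertical map $\gamma:\ker(\alpha_1/V(\alpha_1))\to\ker(\alpha_2/V(\alpha_2))$ is a simplicial group homomorphism. Applying $\pi_1$ yields $\mathcal{V}M(f):=\pi_1(\gamma)$, a homomorphism of abelian groups. For identities, $f=\mathrm{id}$ induces (up to the canonical homotopy identifications above) the identity simplicial map, so $\pi_1$ of it is the identity. For composition, given $f:(G_1,N_1)\to(G_2,N_2)$ and $g:(G_2,N_2)\to(G_3,N_3)$, functoriality of resolutions gives that the simplicial map induced by $g\circ f$ agrees, up to homotopy, with the composite of those induced by $f$ and $g$; restricting to the kernels of the $\alpha_i/V(\alpha_i)$ and then applying the functor $\pi_1$ gives $\mathcal{V}M(g\circ f)=\mathcal{V}M(g)\circ\mathcal{V}M(f)$.

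The main obstacle is the well-definedness bookkeeping: one must check that choosing different resolutions, or different lifts of $f$ to a simplicial map $K_{1_.}\to K_{2_.}$, changes $\gamma$ only by a simplicial homotopy that is compatible with all the quotients and the passage to kernels, so that $\pi_1(\gamma)$ is genuinely independent of the choices. This is where the real work lies, but it is entirely standard simplicial homotopy theory — uniqueness of projective-type resolutions up to chain homotopy, transported to the nonabelian simplicial setting via the Moore complex and Theorem~\ref{sg}(\ref{m}) — so I would record it briefly and refer to Duskin (1975) and Inassaridze (1974) rather than reprove it. Everything else (abelianness of the target, construction of $\mathcal{V}M(f)$, identities, composition) is then a short formal consequence of the commutative diagrams already displayed.
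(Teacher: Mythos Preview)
Your proposal is correct and follows the paper's approach: the paper's entire argument is the paragraph preceding the theorem, which constructs $\pi_1(\gamma)$ from the functoriality of free simplicial resolutions and then simply states the theorem, leaving the routine verifications (abelianness via Theorem~\ref{sg}(\ref{a}), independence of resolution choices, identities, composition) implicit. You have spelled these standard checks out more carefully than the paper does, but the underlying method is identical.
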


 The long exact sequence of \eqref{long} implies the following theorems.
 \begin{thm}\label{ps}
The Baer invariant of a group is a special case of the Baer invariant a pair of groups i.e. $\mathcal{V}M(G,G)\cong \mathcal{V}M(G)$. Thus for a cyclic group $C$ and a free group $F$ we have $\mathcal{V}M(C,C)=1=\mathcal{V}M(F,F)$. Also $\mathcal{V}M(G,1)$ is a trivial group.
\end{thm}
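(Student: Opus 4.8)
The plan is to establish each of the stated claims by choosing the free simplicial resolutions involved in the most convenient way; this is legitimate because the Baer invariant of a pair, being built from left derived functors, does not depend on those choices.

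For $\mathcal{V}M(G,G)\cong\mathcal{V}M(G)$: since $G/G$ is trivial I would take the constant trivial simplicial group as the free simplicial resolution $L_.$ of $G/G$, with $\alpha\colon K_.\to L_.$ the unique (necessarily surjective) simplicial map. Then $V(L_.)=1$, so $L_./V(L_.)=1$ and hence $\ker\big(\alpha/V(\alpha)\big)=K_./V(K_.)$; therefore
\[\mathcal{V}M(G,G)=\pi_1\Big(\ker\big(\alpha/V(\alpha)\big)\Big)=\pi_1\big(K_./V(K_.)\big)\cong\mathcal{V}M(G)\]
by Franco's topological description of $\mathcal{V}M(G)$. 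Equivalently, putting $N=G$ in the long exact sequence \eqref{long} and using $\mathcal{V}M(1)=1$, $1/V(1)=1$, together with the vanishing of $\pi_2\big(L_./V(L_.)\big)$ (as $L_.$ resolves the trivial group), collapses the relevant portion to $0\to\mathcal{V}M(G,G)\to\mathcal{V}M(G)\to 1$, which yields the isomorphism.

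The two displayed special cases then follow by combining this with the known triviality of the ordinary Baer invariant on free and cyclic groups. If $F$ is free, the constant simplicial group on $F$ (all faces and degeneracies the identity) is a free simplicial resolution of $F$, so $K_./V(K_.)$ is the constant simplicial group on $F/V(F)$ and has $\pi_1=0$; thus $\mathcal{V}M(F)=1$ (equivalently, $F=F/1$ and Baer's formula gives $\big(1\cap V(F)\big)/[1V^{*}F]=1$), whence $\mathcal{V}M(F,F)\cong\mathcal{V}M(F)=1$. If $C$ is cyclic, one invokes $\mathcal{V}M(C)=1$ for every variety $\mathcal{V}$ --- for instance, when $\mathcal{V}$ contains the abelian variety this is immediate, since a presentation $C=F/R$ with $F$ infinite cyclic has $F\in\mathcal{V}$, hence $V(F)=1$ and $\mathcal{V}M(C)=\big(R\cap V(F)\big)/[RV^{*}F]=1$ --- so that $\mathcal{V}M(C,C)\cong\mathcal{V}M(C)=1$.

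For $\mathcal{V}M(G,1)$: since $G/1=G$, I would take $L_.=K_.$ and $\alpha=\mathrm{id}_{K_.}$, so that $\alpha/V(\alpha)=\mathrm{id}_{K_./V(K_.)}$ is an isomorphism of simplicial groups; then $\ker\big(\alpha/V(\alpha)\big)$ is the trivial simplicial group and $\mathcal{V}M(G,1)=\pi_1(1)$ is trivial. (This also drops out of \eqref{long} with $N=1$: every homomorphism there induced by $\mathrm{id}_G$ is an identity and $1/[1V^{*}G]=1$, which forces $\mathcal{V}M(G,1)=1$.) I do not anticipate a genuine obstacle: once the topological definition and the long exact sequence \eqref{long} are in hand, all three assertions are essentially formal. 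The only points needing attention are the appeal to independence of the Baer invariant of a pair from the chosen resolutions (used to pick $L_.$ trivial, respectively $L_.=K_.$), and, in the cyclic case, pinning down the exact scope of the classical fact $\mathcal{V}M(C)=1$.
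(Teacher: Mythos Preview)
Your argument is correct and matches the paper's: the paper offers no explicit proof beyond the sentence ``The long exact sequence of \eqref{long} implies the following theorems,'' and you carry out exactly that deduction, while also supplying the direct verification via convenient choices of $L_.$ (trivial for $N=G$, and $L_.=K_.$ with $\alpha=\mathrm{id}$ for $N=1$) that the paper leaves implicit. Your additional care about the left end of \eqref{long} (vanishing of $\pi_2$ when $L_.$ resolves the trivial group) and about the scope of $\mathcal{V}M(C)=1$ is more than the paper provides.
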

\begin{thm}
Let $G$ be the semi-direct product of $N$ by $Q$. Then $\mathcal{V}M(G,N)\cong\ker\big(\mathcal{V}M(G)\twoheadrightarrow\mathcal{V}M(G/N)\big)$
and $\mathcal{V}M(G)\cong\mathcal{V}M(G,N)\oplus\mathcal{V}M(Q).$
\end{thm}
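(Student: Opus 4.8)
The plan is to exploit the long exact sequence \eqref{long} together with the splitting coming from the semi-direct product structure. Since $G = N \rtimes Q$, the canonical projection $G \twoheadrightarrow Q \cong G/N$ is split by the inclusion $Q \hookrightarrow G$. Applying the functor $\tau_V$ to this retraction, and using naturality of \eqref{long} with respect to the pair maps $(Q,1) \to (G,N) \to (G/N, G/N)$, we obtain that the map $\mathcal{V}M(G) \to \mathcal{V}M(G/N)$ in \eqref{long} is a split epimorphism: its splitting is $\mathcal{V}M(Q) = \mathcal{V}M(G/N) \xrightarrow{\mathcal{V}M(\iota)} \mathcal{V}M(G)$ induced by $\iota\colon Q \hookrightarrow G$. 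In particular that map is surjective, so the connecting homomorphism $\mathcal{V}M(G/N) \to N/[NV^*G]$ is zero, and the tail of \eqref{long} splits off. This immediately yields the exact sequence
\[
0 \rightarrow \mathcal{V}M(G,N) \rightarrow \mathcal{V}M(G) \rightarrow \mathcal{V}M(G/N) \rightarrow 0,
\]
hence $\mathcal{V}M(G,N) \cong \ker\big(\mathcal{V}M(G) \twoheadrightarrow \mathcal{V}M(G/N)\big)$, which is the first assertion.

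For the second assertion, the point is that the above short exact sequence is itself split. Indeed, all groups in \eqref{long} are abelian (Theorem \ref{sg}(1)), so a split epimorphism of abelian groups gives a direct sum decomposition: $\mathcal{V}M(G) \cong \ker\big(\mathcal{V}M(G) \to \mathcal{V}M(G/N)\big) \oplus \mathrm{im}\big(\mathcal{V}M(\iota)\big)$. Since $Q \xrightarrow{\iota} G \twoheadrightarrow G/N = Q$ is the identity, the composite $\mathcal{V}M(Q) \xrightarrow{\mathcal{V}M(\iota)} \mathcal{V}M(G) \to \mathcal{V}M(G/N) = \mathcal{V}M(Q)$ is the identity; in particular $\mathcal{V}M(\iota)$ is a (split) monomorphism, so its image is a copy of $\mathcal{V}M(Q)$. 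Combining with the first part gives $\mathcal{V}M(G) \cong \mathcal{V}M(G,N) \oplus \mathcal{V}M(Q)$.

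First I would set up the naturality square for \eqref{long} applied to $(Q,1) \xrightarrow{\iota} (G,N)$: the relevant squares read $\mathcal{V}M(Q) \to \mathcal{V}M(Q/1) = \mathcal{V}M(Q)$ on the source (the identity, by Theorem \ref{ps}-type reasoning, since $\mathcal{V}M(Q,1)$ is trivial) and $\mathcal{V}M(G) \to \mathcal{V}M(G/N)$ on the target; commutativity of the square together with the identification $G/N \cong Q$ gives the splitting. Then I would invoke abelianness to promote the split surjection to a direct sum. The main obstacle — really the only subtle point — is being careful about the identifications: one must check that the map $\mathcal{V}M(G) \to \mathcal{V}M(G/N)$ occurring in \eqref{long} is genuinely the functorially induced map $\mathcal{V}M$ of the quotient homomorphism $G \to G/N$ (so that functoriality applies), and that under the isomorphism $G/N \cong Q$ the composite with $\mathcal{V}M(\iota)$ is literally the identity of $\mathcal{V}M(Q)$ rather than merely an isomorphism. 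Granting the standard fact that the maps in the Fröhlich-type sequence \eqref{long} are the expected functorial ones (which follows from Franco's construction via the free simplicial resolutions), the rest is formal homological bookkeeping.
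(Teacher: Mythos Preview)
Your argument has a gap at the step ``This immediately yields the exact sequence $0 \to \mathcal{V}M(G,N) \to \mathcal{V}M(G) \to \mathcal{V}M(G/N) \to 0$.'' From the split surjectivity of $\mathcal{V}M(G) \to \mathcal{V}M(G/N)$ you correctly deduce that the connecting map $\mathcal{V}M(G/N) \to N/[NV^*G]$ vanishes, giving exactness on the right. But exactness on the \emph{left}---that is, injectivity of $\mathcal{V}M(G,N) \to \mathcal{V}M(G)$---requires that the \emph{preceding} connecting homomorphism $\pi_2\big(L_./V(L_.)\big) \to \pi_1\big(\ker(\alpha/V(\alpha))\big) = \mathcal{V}M(G,N)$ in the long exact sequence also vanish, and you have not addressed this. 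Without it, (\ref{long}) only tells you that $\mathcal{V}M(G,N)$ \emph{surjects} onto $\ker\big(\mathcal{V}M(G)\to\mathcal{V}M(G/N)\big)$, not that it is isomorphic to it. The gap is repairable by the same idea one degree up: the section $\iota$ also splits $\pi_2\big(K_./V(K_.)\big) \to \pi_2\big(L_./V(L_.)\big)$, forcing that earlier connecting map to be zero.

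The paper's proof sidesteps this degree-by-degree bookkeeping by working at the simplicial level: the section $Q \hookrightarrow G$ lifts, by functoriality of free simplicial resolutions, to a simplicial section of $\alpha$, hence of $\alpha/V(\alpha)$, so the short exact sequence \eqref{valpha} of simplicial groups itself splits. A split fibration of simplicial groups has vanishing connecting homomorphisms in \emph{every} degree, so the long exact sequence decomposes into split short exact sequences throughout, yielding both assertions at once. Your approach and the paper's are the same underlying idea; the paper simply applies the splitting before passing to homotopy groups, which is cleaner and automatically closes the gap above.
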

\begin{proof}
The hypothesis implies that the exact sequence \eqref{valpha} splits and hence the result holds.
\end{proof}
The above theorem shows that if $G$ is the semi-direct product of $N$ by $Q$, then the Baer invariant of $(G,N)$ can be described in presentation of groups as follows.
\begin{cor}
Let $G\cong F/R$ be a free presentation of $G$ and $N\cong S/R$ be a nomal subgroup of $G$ which has a complement in $G$, then  $$\mathcal{V}M(G,N)\cong \frac{R\cap[SV^*F]}{[RV^*F]}.$$
\end{cor}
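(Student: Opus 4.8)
The plan is to deduce the formula from the theorem immediately preceding it together with the classical Hopf--type description of the Baer invariant of a single group. Since $N$ has a complement $Q$ in $G$, the group $G$ is the semi-direct product of $N$ by $Q$, so that theorem supplies a natural isomorphism $\mathcal{V}M(G,N)\cong\ker\big(\mathcal{V}M(G)\twoheadrightarrow\mathcal{V}M(G/N)\big)$, where the epimorphism is the one induced by the projection $G\to G/N$ appearing in \eqref{long}. It therefore suffices to compute this kernel in terms of the given free presentation.

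First I would fix the presentations. Writing $G\cong F/R$, normality of $N\cong S/R$ in $G$ means exactly $R\le S\trianglelefteq F$, and hence $F/S$ is a free presentation of $G/N\cong(F/R)/(S/R)$. By the classical Baer--invariant formula (see e.g. Leedham-Green--McKay, or Franco, 1998, in the present topological language) one has $\mathcal{V}M(G)\cong(R\cap V(F))/[RV^*F]$ and $\mathcal{V}M(G/N)\cong(S\cap V(F))/[SV^*F]$. Moreover, applying the functoriality of this formula to the identity lift $\mathrm{id}_F\colon F\to F$ (which carries $R$ into $S$), the map $\mathcal{V}M(G)\to\mathcal{V}M(G/N)$ is the one induced by the inclusions $R\cap V(F)\hookrightarrow S\cap V(F)$ and $[RV^*F]\le[SV^*F]$; both inclusions are immediate from $R\le S$.

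Next I would read off the kernel. The kernel of the induced map $(R\cap V(F))/[RV^*F]\to(S\cap V(F))/[SV^*F]$ is $\big((R\cap V(F))\cap[SV^*F]\big)/[RV^*F]$. Now each generator of $[SV^*F]$ is a product of two verbal values $v(\dots)$ and hence lies in $V(F)$, so $[SV^*F]\le V(F)$; consequently $(R\cap V(F))\cap[SV^*F]=R\cap[SV^*F]$, and the kernel is $\big(R\cap[SV^*F]\big)/[RV^*F]$ (the quotient makes sense since $[RV^*F]$ is a normal subgroup of $F$ contained in $R\cap[SV^*F]$). Combining this with the isomorphism from the previous theorem yields $\mathcal{V}M(G,N)\cong\big(R\cap[SV^*F]\big)/[RV^*F]$, as claimed.

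The only genuinely delicate point is the compatibility assertion in the second step: one must verify that, under the standard identifications $\pi_1(K_./V(K_.))\cong(R\cap V(F))/[RV^*F]$ and the analogous one for $G/N$, the connecting map in \eqref{long} is indeed the inclusion-induced map rather than merely some unspecified epimorphism. This is routine but requires tracing the isomorphism through a common lift of $\mathrm{id}_F$ to free simplicial resolutions and checking naturality; once granted, the remainder is a direct comparison of the verbal subgroups $[RV^*F]$, $[SV^*F]$ and $V(F)$.
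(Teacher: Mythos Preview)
Your proposal is correct and follows the same route the paper intends: the corollary is stated immediately after the semi-direct product theorem with no separate proof beyond the remark that ``the above theorem shows'' the presentation formula, so the paper's argument is precisely to apply the preceding theorem and read off the kernel via the Hopf--type formula for $\mathcal{V}M$, which is exactly what you do. Your additional care in verifying $[SV^*F]\le V(F)$ and in flagging the naturality of the map $\mathcal{V}M(G)\to\mathcal{V}M(G/N)$ under the presentation isomorphisms fills in details the paper leaves entirely implicit.
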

Note that the above corollary shows that our definition of the Baer invariant of a pair of groups is a vast generalization of the one by Moghaddam, Salemkar and Saany (2007).
\begin{thm}
Suppose that $M$ and $N$ are two subgroups of a group $G$ such that $M\cong MN$, then there exists the following isomorphism  $$\mathcal{V}M(MN,N)\cong \mathcal{V}M(M,M\cap N).$$
\end{thm}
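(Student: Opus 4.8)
The plan is to reduce the statement to a comparison of two pairs with the \emph{same} ambient group, and then to compare their Baer invariants through a common free simplicial resolution. Since $(MN,N)$ is a pair, $N$ is normal in $MN$, so $M\cap N$ is normal in $M$ and the second isomorphism theorem gives $MN/N\cong M/(M\cap N)=:Q$. The hypothesis provides a group isomorphism $\phi:M\stackrel{\sim}{\rightarrow}MN$; transporting the pair $(MN,N)$ along $\phi^{-1}$ (which is an isomorphism $MN\to M$ carrying $N$ to $\phi^{-1}(N)$) yields an isomorphism of pairs $(MN,N)\cong(M,\phi^{-1}(N))$, hence, by functoriality of the Baer invariant on pairs, $\mathcal{V}M(MN,N)\cong\mathcal{V}M(M,\phi^{-1}(N))$. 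So it suffices to prove $\mathcal{V}M(M,\phi^{-1}(N))\cong\mathcal{V}M(M,M\cap N)$; note that both $\phi^{-1}(N)\trianglelefteq M$ and $M\cap N\trianglelefteq M$ have quotient isomorphic to $Q$, and the injective endomorphism $\psi:=\phi^{-1}\circ\iota:M\to M$ ($\iota:M\hookrightarrow MN$ the inclusion) is a morphism of pairs $(M,M\cap N)\to(M,\phi^{-1}(N))$ satisfying $\psi^{-1}(\phi^{-1}(N))=M\cap N$; it induces exactly the comparison map we want to be an isomorphism, and $\mathcal{V}M(\psi)$ is an isomorphism if and only if the map $\mathcal{V}M(M,M\cap N)\to\mathcal{V}M(MN,N)$ induced by the pair inclusion is.

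To compute these I would use the definition $\mathcal{V}M(G,N')=\pi_1\big(\ker(\alpha/V(\alpha))\big)$. Fix a free simplicial resolution $L_.$ of $Q$ and a free simplicial resolution $K_.$ of $M$, and cover the two epimorphisms $M\twoheadrightarrow M/(M\cap N)\cong Q$ and $M\twoheadrightarrow M/\phi^{-1}(N)\cong Q$ by epimorphisms $\alpha,\alpha':K_.\twoheadrightarrow L_.$ (the standard lifting of a surjection to free simplicial resolutions, as used for \eqref{long}). Writing $R_.=\ker\alpha$, $R'_.=\ker\alpha'$, surjectivity of $\alpha,\alpha'$ gives $\alpha(V(K_.))=V(L_.)=\alpha'(V(K_.))$, so as in the text $\ker(\alpha/V(\alpha))\cong R_.V(K_.)/V(K_.)$ and $\ker(\alpha'/V(\alpha'))\cong R'_.V(K_.)/V(K_.)$. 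Hence $\mathcal{V}M(M,M\cap N)=\pi_1\big(R_.V(K_.)/V(K_.)\big)$ and $\mathcal{V}M(M,\phi^{-1}(N))=\pi_1\big(R'_.V(K_.)/V(K_.)\big)$. Since the two maps $M\to Q$ become identified after composing with $\psi$ (one checks $m\mapsto mN$ both ways under the identifications $M/(M\cap N)\cong MN/N\cong M/\phi^{-1}(N)$), a compatible lift of $\psi$ to $K_.$ intertwines $\alpha$ and $\alpha'$ and so induces a simplicial homomorphism $R_.V(K_.)/V(K_.)\to R'_.V(K_.)/V(K_.)$; the theorem reduces to showing this induces an isomorphism on $\pi_1$ (on $\pi_0$ it recovers the known isomorphism $(M\cap N)/[(M\cap N)V^*M]\cong N/[NV^*MN]$).

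The main obstacle is this last step, and it is where $M\cong MN$ must genuinely be used: $R_.$ and $R'_.$ are levelwise-free, aspherical normal simplicial subgroups of the \emph{same} resolution $K_.$, with $\pi_0$ equal to $M\cap N$ resp.\ $\phi^{-1}(N)$, and a priori they are quite different. My approach is to build the two lifts $\alpha$, $\alpha'$ compatibly — using $\phi$, equivalently the injective endomorphism $\psi$ (an isomorphism onto its image, with $\psi^{-1}(\phi^{-1}(N))=M\cap N$), to organise the step-by-step construction — so that the induced map $R_.V(K_.)/V(K_.)\to R'_.V(K_.)/V(K_.)$ has its ``defect'' supported in a free simplicial resolution of a free group, which contributes nothing to $\pi_*$ of a verbal quotient (compare $\mathcal{V}M(F,F)=1$ in Theorem \ref{ps}, and the vanishing of $L^{\tau_V}_m$ of a free group for $m\geq1$). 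Once this is arranged, running the long exact homotopy sequence of the resulting short exact sequence of simplicial groups — legitimate since epimorphisms of simplicial groups are fibrations (Theorem \ref{sg}(\ref{f})), with homotopy computed by the Moore complex (Theorem \ref{sg}(\ref{m})) — forces the map to be an isomorphism on $\pi_1$ (and $\pi_0$), giving the claim. I expect the compatible construction of $\alpha$ and $\alpha'$, together with the control of the verbal subgroups along it, to be the genuinely delicate point; the rest is the homotopy-theoretic bookkeeping already exercised in deriving \eqref{long}.
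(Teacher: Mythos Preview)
The paper's argument is three lines: from $M\cong MN$ and (by the second isomorphism theorem) $MN/N\cong M/(M\cap N)$ it asserts that a single pair of free simplicial resolutions $K_.,L_.$ --- and implicitly a single $\alpha$ --- serves for both pairs ``by the functorial property of free simplicial resolution'', and then concludes directly from the definition. You have, in effect, noticed that this glosses over something: an abstract isomorphism $\phi:M\stackrel{\sim}{\to}MN$ need not carry $M\cap N$ onto $N$, so $\phi$ does not identify the two surjections $M\twoheadrightarrow Q$ and $MN\twoheadrightarrow Q$, and the Baer invariant of a pair depends on the pair, not merely on the isomorphism types of $G$ and $G/N$ separately. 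Your much longer route is thus not a different proof of the same fact so much as an attempt to justify the step the paper takes for granted.

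That attempt, however, has a genuine gap exactly where you flag it. The reduction to comparing $\mathcal{V}M(M,M\cap N)$ with $\mathcal{V}M(M,\phi^{-1}(N))$ via the injective endomorphism $\psi=\phi^{-1}\circ\iota$ is correct, but from there having two epimorphisms $\alpha,\alpha':K_.\to L_.$ lifting two \emph{different} surjections $M\twoheadrightarrow Q$ gives no general mechanism forcing $\pi_1\big(\ker(\alpha/V(\alpha))\big)\cong\pi_1\big(\ker(\alpha'/V(\alpha'))\big)$. The heuristic that the ``defect'' can be arranged to live in a free simplicial resolution of a free group is unsupported: $\psi$ is injective but typically not surjective, no free cokernel is in sight, and the vanishing $\mathcal{V}M(F,F)=1$ of Theorem~\ref{ps} concerns the pair $(F,F)$, not a relative comparison term sitting between two kernels inside the same $K_./V(K_.)$. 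So the proposal does not close the gap it (correctly) identifies; the paper's proof, by contrast, simply does not raise the issue, treating ``same $K_.,L_.$'' as sufficient without checking that the same $\alpha$ lifts both quotient maps.
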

\begin{proof}
By the second isomorphism theorem we have $\frac{MN}{N}\cong \frac{M}{M\cap N}$. Let $K_.$ and $L_.$ be the free simplicial groups corresponding to $MN$ and $MN/N$, respectively. Because of the functorial property of
free simplicial resolution corresponding to each group, we conclude that $K_.$ and $L_.$ are also simplicial groups corresponding to $M$ and $\frac{M}{M\cap N}$, respectively. Hence by the definition the result holds.
\end{proof}

Using the exact sequence (1.3) and the structure of its sixth term given by Eckmann, Hilton, and Stammbach (1972) and Lue (1976), when $N$ is a central and an $\mathcal{N}_c$-central subgroup of $G$, respectively, we have the following theorem.
\begin{thm}
Let $N$ be a central subgroup of $G$ then $M(G,N)\cong G^{ab}\otimes N$. Also, if $N$ is an $\mathcal{N}_c$-central subgroup of $G$, then $M^{(c)}(G,N)\cong N \otimes \frac{G}{\gamma_c(G)} \otimes \cdots \otimes \frac{G}{\gamma_c(G)}$ with $c$ copies of $\frac{G}{\gamma_c(G)}$
\end{thm}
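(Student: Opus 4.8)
The plan is to extract both isomorphisms directly from the long exact sequence \eqref{long}, once the variety is fixed and one of its terms is pinned down. For the first assertion I would take $\mathcal{V}$ to be the variety of abelian groups, so that $V(G)=[G,G]$, $\mathcal{V}M=M$ is the Schur multiplier, $\frac{G}{V(G)}=G^{ab}$, and $[NV^*G]=[N,G]$. Since $N\leq Z(G)$ we have $[N,G]=1$, so the fourth term of \eqref{long} collapses to $N$ and the sequence becomes exactly Ellis's exact sequence from the Introduction: $\cdots\to M(G,N)\to M(G)\to M(G/N)\to N\to G^{ab}\to (G/N)^{ab}\to 0$. From here the argument splits into two tasks: first identify the image of $M(G,N)$ in $M(G)$, then show the map $M(G,N)\to M(G)$ is injective.

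For the first task, exactness shows this image equals $\ker\big(M(G)\to M(G/N)\big)$, and because $1\to N\to G\to G/N\to 1$ is a central extension I would invoke Ganea's homomorphism together with the analysis of the homology of central extensions of Eckmann--Hilton--Stammbach to obtain a natural isomorphism $\ker\big(M(G)\to M(G/N)\big)\cong G^{ab}\otimes N$; this is the ``structure of the relevant term'' referred to above. Consequently $M(G,N)$ surjects onto $G^{ab}\otimes N$.

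For the second task I would use that the short exact sequence \eqref{valpha} of simplicial groups is a fibration (Theorem~\ref{sg}), so its long exact homotopy sequence prolongs \eqref{long} one step to the left, $\pi_2\big(L_./V(L_.)\big)\to M(G,N)\to M(G)\to\cdots$, and one term further to the left this receives $\pi_2\big(K_./V(K_.)\big)$. Since $\pi_2\big(K_./V(K_.)\big)=L^{\tau_V}_2(G)$ and $\pi_2\big(L_./V(L_.)\big)=L^{\tau_V}_2(G/N)$ are, for the abelian variety, $H_3(G)$ and $H_3(G/N)$, the incoming map $\pi_2\big(L_./V(L_.)\big)\to M(G,N)$ vanishes as soon as $H_3(G)\to H_3(G/N)$ is onto, which for a central extension is again part of the Eckmann--Hilton--Stammbach picture; hence $M(G,N)\hookrightarrow M(G)$ and, with the first task, $M(G,N)\cong G^{ab}\otimes N$. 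The nilpotent statement would be obtained in the same way with $\mathcal{V}=\mathcal{N}_c$, $V(G)=\gamma_{c+1}(G)$, $\mathcal{V}M=M^{(c)}$, $\tau_V=\tau_c$: being $\mathcal{N}_c$-central makes $[NV^*G]$ trivial, so the fourth term of \eqref{long} is again $N$; Lue's $\mathcal{N}_c$-analogue of the Ganea sequence replaces Eckmann--Hilton--Stammbach and identifies $\ker\big(M^{(c)}(G)\to M^{(c)}(G/N)\big)$ with $N\otimes\frac{G}{\gamma_c(G)}\otimes\cdots\otimes\frac{G}{\gamma_c(G)}$ ($c$ copies of $\frac{G}{\gamma_c(G)}$), and injectivity of $M^{(c)}(G,N)\to M^{(c)}(G)$ follows as before from surjectivity of $L^{\tau_c}_2(G)\to L^{\tau_c}_2(G/N)$.

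I expect the genuine content to lie in the second task. Collapsing the fourth term is immediate from $N\leq Z(G)$ (resp. from $N$ being $\mathcal{N}_c$-central), and the isomorphism identifying $\ker\big(M(G)\to M(G/N)\big)$ is quoted from Eckmann--Hilton--Stammbach and Lue; but it is the vanishing of the map into $M(G,N)$ coming from the second derived functor --- equivalently, the surjectivity of $H_3(G)\to H_3(G/N)$ and of its nilpotent analogue for central (resp. $\mathcal{N}_c$-central) subgroups --- that upgrades the surjection onto $G^{ab}\otimes N$ into an isomorphism, and that is where centrality of $N$ has to be used in an essential way.
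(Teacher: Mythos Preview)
Both steps of your decomposition rest on claims that fail.

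In the first task you assert that Eckmann--Hilton--Stammbach yields an isomorphism $\ker\big(M(G)\to M(G/N)\big)\cong G^{ab}\otimes N$. What they actually produce is the Ganea map $G^{ab}\otimes N\to M(G)$, whose image is that kernel but which is not injective in general: for $G=N=\mathbb{Z}/p$ one has $M(G)=0$, so the kernel is trivial, whereas $G^{ab}\otimes N\cong\mathbb{Z}/p$. Thus your first task yields only a surjection $G^{ab}\otimes N\twoheadrightarrow\mathrm{im}\big(M(G,N)\to M(G)\big)$, which points the wrong way for your argument.

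In the second task you assert that $H_3(G)\to H_3(G/N)$ is onto for central extensions. This also fails: with $G=\mathbb{Z}$ and $N=2\mathbb{Z}$ one has $H_3(G)=0$ but $H_3(G/N)=H_3(\mathbb{Z}/2)\cong\mathbb{Z}/2$. Hence the connecting map $\pi_2\big(L_./V(L_.)\big)\to M(G,N)$ need not vanish and $M(G,N)\to M(G)$ need not be injective. The same counterexamples defeat the $\mathcal{N}_c$ version.

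The paper does not split the problem into ``surjection onto $G^{ab}\otimes N$'' plus ``injection into $M(G)$''. Its argument is that the long exact sequence \eqref{long}, continued to the left by the $\pi_2$-terms, \emph{is} the eight-term exact sequence constructed by Eckmann--Hilton--Stammbach (for $\mathcal{V}$ abelian) and by Lue (for $\mathcal{V}=\mathcal{N}_c$) for a central, respectively $\mathcal{N}_c$-central, extension; those authors place $G^{ab}\otimes N$, respectively $N\otimes\big(G/\gamma_c(G)\big)^{\otimes c}$, precisely in the slot that \eqref{long} fills with $\mathcal{V}M(G,N)$, and the identification is made at the level of the whole sequence rather than termwise. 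The kernel of the Ganea map and the cokernel of $H_3(G)\to H_3(G/N)$ are one and the same obstruction, so neither vanishes separately; what EHS and Lue supply is not the two vanishings your plan requires but a single natural identification of exact sequences.
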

\begin{thm}
Let $\{(G_i,N_i)\}_{i\in I}$ be a given direct system of pairs
of groups with the directed index set $I$, then ${\mathcal{V}M(\varinjlim G_i,\varinjlim N_i)\cong \varinjlim \mathcal{V} M(G_i,N_i)}.$
\end{thm}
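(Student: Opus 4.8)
The plan is to show that every functor entering the construction of $\mathcal{V}M(-,-)$ commutes with filtered colimits and then to chain these isomorphisms together. Write $G=\varinjlim G_i$, $N=\varinjlim N_i$ (viewed as a normal subgroup of $G$) and $Q_i=G_i/N_i$; since colimits commute with colimits and with quotients, $G/N\cong\varinjlim Q_i$. Recall that by definition $\mathcal{V}M(G,N)=\pi_1\big(\ker(\alpha/V(\alpha))\big)$, where $\alpha\colon K_.\to L_.$ is the map of free simplicial resolutions induced by $G\twoheadrightarrow G/N$, and that this value does not depend on the chosen resolution. I would therefore fix once and for all the canonical functorial free simplicial resolution $K_.(-)$ arising from the free-group cotriple $\bot=FU$ ($F$ the free-group functor, $U$ the underlying-set functor), so that $K_.(-)$, $L_.(-)$ and the comparison $\alpha$ are genuinely functorial in the pair, and use it to compute each $\mathcal{V}M(G_i,N_i)$.

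The argument then rests on four ``continuity'' facts. (i) In each simplicial degree $K_n(-)$ is an iterate of $\bot=FU$, and both $F$ (a left adjoint) and $U$ (filtered colimits of groups being computed on underlying sets) preserve filtered colimits; hence $K_.(G)\cong\varinjlim K_.(G_i)$, $L_.(G/N)\cong\varinjlim L_.(Q_i)$ and $\alpha\cong\varinjlim\alpha_i$. (ii) The verbal subgroup functor $V(-)$, hence $\tau_V=(-)/V(-)$, commutes with filtered colimits, since any $V$-value in $\varinjlim H_j$ is the image of a $V$-value in some $H_j$, so $V(\varinjlim H_j)\cong\varinjlim V(H_j)$; applied degreewise this gives $K_./V(K_.)\cong\varinjlim\big(K_{i_.}/V(K_{i_.})\big)$ and similarly for $L_.$. (iii) Filtered colimits in the category of groups are exact, so they preserve kernels and finite intersections of kernels; together with (i) and (ii) this yields $\ker(\alpha/V(\alpha))\cong\varinjlim\ker(\alpha_i/V(\alpha_i))$ as simplicial groups. (iv) By Theorem~\ref{sg}(\ref{m}) the homotopy groups of a simplicial group are the homology of its Moore complex, which is assembled from finite intersections of kernels of face maps; since both these intersections (by (iii)) and homology commute with filtered colimits, so does $\pi_1$. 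Chaining (i)--(iv), and observing that every isomorphism is natural in the index $i$, one obtains
$$\mathcal{V}M(G,N)=\pi_1\big(\ker(\alpha/V(\alpha))\big)\cong\pi_1\Big(\varinjlim\ker\big(\alpha_i/V(\alpha_i)\big)\Big)\cong\varinjlim\pi_1\big(\ker(\alpha_i/V(\alpha_i))\big)=\varinjlim\mathcal{V}M(G_i,N_i),$$
the composite being the canonical comparison map furnished by functoriality of $\mathcal{V}M(-,-)$.

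I expect the only genuinely delicate point to be (i): one must work with a single functorial construction that is at once an honest free simplicial resolution --- degreewise free, aspherical, with $\pi_0$ the given group, so that it legitimately computes the Baer invariant --- and commutes with filtered colimits. The cotriple resolution meets both demands: degreewise freeness and asphericity are the standard comonadic facts, while ``continuity'' holds because $\bot$ is a composite of filtered-colimit-preserving functors. It is worth stressing that the abstract colimit $\varinjlim K_.(G_i)$ need not be degreewise free in any naive sense --- a filtered colimit of free groups need not be free --- but this is immaterial, as that colimit is canonically identified with $K_.(G)$, which is degreewise free by construction. Granting (i), the remaining steps (ii)--(iv) are routine consequences of the exactness of filtered colimits in the category of groups together with Theorem~\ref{sg}.
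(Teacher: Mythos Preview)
Your argument is correct and follows essentially the same route as the paper: show that each constituent of the construction---the free simplicial resolution, the functor $(-)/V(-)$, the kernel, and $\pi_1$---commutes with filtered colimits, and then chain the isomorphisms. The minor differences are in the justifications rather than the strategy: the paper invokes that $(-)/V(-)$ has a right adjoint where you argue directly on verbal words, and it uses the Five Lemma where you appeal to exactness of filtered colimits; conversely, you are more explicit than the paper about fixing the cotriple resolution so that the colimit of resolutions is again a \emph{free} simplicial resolution of the colimit, a point the paper treats only implicitly.
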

\begin{proof}
For any $i\in I$, let $K_{i_.}$ and $L_{i_.}$ be the corresponding free simplicial resolutions of $G_i$ and $G_i/N_i$, respectively. Assume that $\alpha_i:K_{i_.}\rightarrow L_{i_.}$ is the corresponding epimorphism of simplicial groups. Thus we can consider the following exact sequence of simplicial groups.
$$1\rightarrow\ker (\frac{\alpha_i}{V(\alpha_i)})\rightarrow \frac{K_{i_.}}{V(K_{i_.})} \stackrel{\frac{\alpha_i}{V(\alpha_i)}}{\rightarrow} \frac{L_{i_.}}{V(L_{i_.})}\rightarrow 1.$$
Vasagh, Mirebrahimi and Mashayekhy proved that $\varinjlim \pi_n(K_{i_.})\cong \pi_n(\varinjlim K_{i_.})$, where $K_{i_.}$ is a simplicial group. Hence $\varinjlim K_{i_.}$ and $\varinjlim L_{i_.}$ are simplicial groups corresponding to $\varinjlim G_i$ and $\varinjlim G_i/N_i$, respectively, and we have the following exact sequence:
$$1\rightarrow\ker\big(\frac{\varinjlim(\alpha_i)}{V\big(\varinjlim(\alpha_i)\big)}\big) \rightarrow \frac{\varinjlim(K_{i_.})}{V\big(\varinjlim(K_{i_.})\big)} \stackrel{\frac{\varinjlim(\alpha_i)}{V\big(\varinjlim(\alpha_i)\big)}}{\rightarrow} \frac{\varinjlim(L_{i_.})}{V\big(\varinjlim(L_{i_.})\big)}\rightarrow 1$$
Since the functor $-/V(-)$ has right adjoint, $\varinjlim (\frac{K_{i_.}}{V(K_{i_.})})\cong  \frac{\varinjlim K_{i_.}}{V(\varinjlim K_{i_.})}$. The fact that direct limit preserve the exact sequence yields the following commutative diagrams
$$\begin{array}{ccccccccc}1 & \rightarrow &\ker(\frac{\varinjlim(\alpha_i)}{V(\varinjlim(\alpha_i))}) &\rightarrow & \frac{\varinjlim(K_{i_.})}{V\big(\varinjlim(K_{i_.})\big)}& {\rightarrow}& {\frac{\varinjlim(L_{i_.})}{V\big(\varinjlim(L_{i_.})\big)}}&\rightarrow& 1\\
&&\downarrow&&\downarrow&&\downarrow&&\\1&\rightarrow&\varinjlim\big(\ker (\frac{\alpha_i}{V(\alpha_i)})\big)&\rightarrow &\varinjlim \big(\frac{K_{i_.}}{V(K_{i_.})}\big) &{\rightarrow} &\varinjlim \big(\frac{L_{i_.}}{V(L_{i_.})}\big)&\rightarrow &1.\end{array}$$

Five Lemma implies  that $\varinjlim (\ker (\frac{\alpha_i}{V(\alpha_i)}))\cong  \ker \varinjlim(\frac{\alpha_i}{V(\alpha_i)})$.
Also the homotopy groups of simplicial groups commute with direct limits (See Vasagh, Mirebrahimi and Mashayekhy), hence we have
  $$\mathcal{V}M(\varinjlim G_i,\varinjlim N_i)\cong \pi_1\big( \ker \varinjlim(\frac{\alpha_i}{V(\alpha_i)})\big)$$
  $$\cong\pi_1\Big(\varinjlim \big(\ker (\frac{\alpha_i}{V(\alpha_i)})\big)\Big)\cong  \varinjlim \mathcal{V} M(G_i,N_i).$$
\end{proof}
Let $G$ be a group and $N$ be a normal subgroup of it, and consider $K_.$ and $L_.$ as the free simplicial resolutions corresponding to $G$ and $G/N$, respectively. The simplicial epimorphism $\alpha:K_.\rightarrow L_.$ gives rise epimorphisms $\alpha_n:K_./\gamma_n(K_.)\rightarrow L_./\gamma_{n}(L_.)$ and $\beta_n:\gamma_{n}(K_.)/\gamma_{n+1}(K_.)\rightarrow \gamma_{n}(L_.)/\gamma_{n+1}(L_.)$ which induce the following commutative diagram:
\begin{equation}\label{dian}\begin{array}{cccccccccc}
 &&1&&1&&1\\&&\downarrow&&\downarrow&&\downarrow&&\\1&\rightarrow& \ker(\beta_n)&\rightarrow &\frac{\gamma_{n}(K_.)}{\gamma_{n+1}(K_.)}&{\rightarrow}&\frac{ \gamma_{n}(L_.)}{\gamma_{n+1}(L_.)}&\rightarrow&1\\
 &&\downarrow&&\downarrow&&\downarrow&&\\ 1&\rightarrow&\ker(\alpha_{n+1})&\rightarrow& \frac{K_.}{\gamma_{n+1}(K_.)} &{\rightarrow}&\frac{L_.}{\gamma_{n+1}(L_.)}&\rightarrow&1\\&&\downarrow&&\downarrow&&\downarrow&&\\1& \rightarrow&\ker(\alpha_n)&\rightarrow&
 \frac{K_.}{\gamma_{n}(K_.)}&{\rightarrow}&
 \frac{L_.}{\gamma_{n}(L_.)}&\rightarrow&1\\&&\downarrow&&\downarrow&&\downarrow&&\\&&1&&1&&1
 \end{array}\end{equation}
 Since $\beta_n$ is an epimorphism so is $\ker(\alpha_{n+1})\rightarrow\ker(\alpha_{n})$. Since every epimorphism of simplicial groups is a fiberation, the left column exact sequence induces the following long exact sequence of homotopy groups
$$\begin{array}{ll}\cdots&\rightarrow\pi_1\big(\ker(\beta_{n})\big)\rightarrow\pi_1\big(\ker(\alpha_{n+1})\big)
\rightarrow \pi_1\big(\ker(\alpha_{n})\big)\\&\rightarrow \pi_0\big(\ker(\beta_{n})\big)\rightarrow\pi_0\big(\ker(\alpha_{n+1})\big) \rightarrow \pi_0\big(\ker(\alpha_{n})\big)\rightarrow 1.\end{array}$$
Using some isomorphisms we can rewrite the above sequence as the following long exact sequence:
\begin{equation}\label{eq}\begin{array}{ll}\cdots&\rightarrow\pi_1\big(\ker(\beta_{n})\big)\rightarrow M^{(n)}(G,N) \rightarrow M^{(n-1)}(G,N)\\ &\rightarrow \pi_0\big(\ker(\beta_{n})\big)\rightarrow \frac{N}{\gamma_{n+1}(G,N)}\rightarrow \frac{N}{\gamma_{n}(G,N)}\rightarrow 1.\end{array}\end{equation}

Now for $n=2$, we discuss on the long exact sequence \eqref{eq}.
First in the following lemma, we concentrate on $\ker(\beta_2)$ which gives a relation between exterior product of a group and its quotient.
\begin{lem}
Let $(G,N)$ be a pair of group, then we have the following exact sequence of groups
 $$1\rightarrow \frac{N}{[N,G]}\wedge\frac{N}{[N,G]}\oplus \frac{N}{[N,G]}\otimes (\frac{G}{N})^{ab}\rightarrow G^{ab}\wedge G^{ab}\rightarrow (\frac{G}{N})^{ab}\wedge (\frac{G}{N})^{ab}\rightarrow 1.$$
\end{lem}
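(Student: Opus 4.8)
The plan is to extract the sequence from the top row of diagram~\eqref{dian} in the case $n=2$, after replacing the lower central quotient $\gamma_2(-)/\gamma_3(-)$ by an exterior square. Recall the classical natural isomorphism $\gamma_2(F)/\gamma_3(F)\cong F^{ab}\wedge F^{ab}$ for a free group $F$, given by $[x,y]\gamma_3(F)\mapsto\bar x\wedge\bar y$. Applying it degreewise to the free simplicial resolutions $K_.$ of $G$ and $L_.$ of $G/N$ (each $K_m$ and $L_m$ being free) turns the first row of \eqref{dian} for $n=2$ into a short exact sequence of simplicial abelian groups $1\to\ker\beta_2\to K_.^{ab}\wedge K_.^{ab}\to L_.^{ab}\wedge L_.^{ab}\to 1$, whose right-hand map is $\bar\alpha\wedge\bar\alpha$, where $\bar\alpha\colon K_.^{ab}\to L_.^{ab}$ is the abelianisation of $\alpha$; set $\bar R_.=\ker\bar\alpha$.

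The next step is to describe $\ker\beta_2$ more closely. For each $m$ the sequence $0\to\bar R_m\to K_m^{ab}\to L_m^{ab}\to 0$ is a short exact sequence of free abelian groups, and for such a sequence the canonical two-step filtration of the exterior square of the middle term is exact and functorial: the subgroup $\bigwedge^2\bar R_m$ embeds in $\ker\bigl(\bigwedge^2 K_m^{ab}\to\bigwedge^2 L_m^{ab}\bigr)$ with quotient naturally isomorphic to $\bar R_m\otimes L_m^{ab}$. Assembling these isomorphisms along the simplicial direction produces a short exact sequence of simplicial abelian groups $1\to\bar R_.\wedge\bar R_.\to\ker\beta_2\to\bar R_.\otimes L_.^{ab}\to 1$.

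Now apply $\pi_0$. Using the extra degeneracies present in a simplicial abelian group one checks that $\pi_0$ commutes with the bifunctors $-\wedge-$ and $-\otimes-$, so that $\pi_0(K_.^{ab}\wedge K_.^{ab})\cong G^{ab}\wedge G^{ab}$, $\pi_0(L_.^{ab}\wedge L_.^{ab})\cong(G/N)^{ab}\wedge(G/N)^{ab}$, $\pi_0(\bar R_.\wedge\bar R_.)\cong\pi_0(\bar R_.)\wedge\pi_0(\bar R_.)$ and $\pi_0(\bar R_.\otimes L_.^{ab})\cong\pi_0(\bar R_.)\otimes(G/N)^{ab}$. By Franco's theorem, specialised to the variety of abelian groups (where $[NV^{*}G]=[N,G]$), one has $\pi_0(\bar R_.)=\pi_0(\ker\bar\alpha)\cong N/[N,G]$, while $\pi_0(K_.^{ab})\cong G^{ab}$ and $\pi_0(L_.^{ab})\cong(G/N)^{ab}$. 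Feeding these identifications into the homotopy long exact sequences of the two short exact sequences above should yield, on one hand, $\pi_0(\ker\beta_2)\cong\frac{N}{[N,G]}\wedge\frac{N}{[N,G]}\oplus\frac{N}{[N,G]}\otimes(\frac{G}{N})^{ab}$, and on the other hand the exactness of $\pi_0(\ker\beta_2)\to G^{ab}\wedge G^{ab}\to(\frac{G}{N})^{ab}\wedge(\frac{G}{N})^{ab}\to 1$ together with injectivity of the left-hand map; splicing the two then gives the asserted sequence.

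The main obstacle is exactly the point at which I passed from the homotopy long exact sequences to the two displays in the previous paragraph, namely the exactness on the left. A priori $\pi_0$ is merely right exact, so the first short exact sequence only exhibits $\pi_0(\ker\beta_2)$ as an extension of $\frac{N}{[N,G]}\otimes(\frac{G}{N})^{ab}$ by a quotient of $\frac{N}{[N,G]}\wedge\frac{N}{[N,G]}$, and the second only places a quotient of $\pi_0(\ker\beta_2)$ inside $G^{ab}\wedge G^{ab}$; the substance of the lemma is that the connecting homomorphisms responsible for these discrepancies vanish and that the first extension splits. I would attack this by using Theorem~\ref{sg}(\ref{m}) and~\ref{sg}(\ref{n}) together with the Künneth formula to compute $\pi_1$ of $\bar R_.\otimes L_.^{ab}$, of $\bar R_.\wedge\bar R_.$ and of $L_.^{ab}\wedge L_.^{ab}$ in terms of the homotopy of $\bar R_.$, $K_.^{ab}$ and $L_.^{ab}$ — which carry the Schur multipliers $M(G)$ and $M(G/N)$ and the pair multiplier $M(G,N)$ — and then checking that the resulting contributions are killed by the filtration maps. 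Managing this interplay between the higher homotopy of the exterior square and its two-step filtration is where the real difficulty lies.
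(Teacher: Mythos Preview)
Your strategy coincides with the paper's: rewrite the top row of \eqref{dian} for $n=2$ via the natural isomorphism $\gamma_2(F)/\gamma_3(F)\cong F^{ab}\wedge F^{ab}$, analyse $\ker\beta_2$, and apply $\pi_0$. Where you set up a two-step filtration $0\to\bar R_.\wedge\bar R_.\to\ker\beta_2\to\bar R_.\otimes L_.^{ab}\to 0$, the paper goes further and asserts, by a rank count in each simplicial degree, a \emph{direct-sum} splitting $\ker\beta_2\cong(\ker\alpha_2\wedge\ker\alpha_2)\oplus(\ker\alpha_2\otimes L_.^{ab})$ of simplicial abelian groups; and where you worry about left exactness, the paper disposes of it by declaring that, since $\gamma_2(L_.)/\gamma_3(L_.)$ is degreewise free abelian, ``every right term of the long exact sequence splits''.

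Your caution is well placed: both of those moves are invalid, and the lemma as stated is false. A degreewise rank count produces degreewise splittings, but these need not be compatible with the simplicial structure, so neither the direct-sum decomposition of $\ker\beta_2$ nor a simplicial section of $K_.^{ab}\wedge K_.^{ab}\to L_.^{ab}\wedge L_.^{ab}$ is forced. For a concrete failure take $G=\mathbb{Z}$ and $N=2\mathbb{Z}$: then $G^{ab}\wedge G^{ab}=\bigwedge^{2}\mathbb{Z}=0$ and $(G/N)^{ab}\wedge(G/N)^{ab}=\bigwedge^{2}(\mathbb{Z}/2)=0$, while $N/[N,G]\cong\mathbb{Z}$ gives $\bigwedge^{2}\mathbb{Z}=0$ and $\mathbb{Z}\otimes\mathbb{Z}/2\cong\mathbb{Z}/2$, so the alleged sequence reads $1\to\mathbb{Z}/2\to 0\to 0\to 1$. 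Thus the connecting homomorphism $\pi_1(L_.^{ab}\wedge L_.^{ab})\to\pi_0(\ker\beta_2)$ is genuinely nonzero in general, and your proposed K\"unneth computation cannot make it vanish. What your filtration argument legitimately yields is the right-exact portion $\pi_0(\ker\beta_2)\to G^{ab}\wedge G^{ab}\to(G/N)^{ab}\wedge(G/N)^{ab}\to 1$ together with a description of $\pi_0(\ker\beta_2)$ as an extension of $\frac{N}{[N,G]}\otimes(\frac{G}{N})^{ab}$ by a quotient of $\frac{N}{[N,G]}\wedge\frac{N}{[N,G]}$; that is the correct statement, and it already suffices for the application in the subsequent theorem.
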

\begin{proof}
 It is known that if $F$ is a free group, then $\gamma_2(F)/\gamma_3(F)$ is a free abelian group which is isomorphic to $F^{ab}\wedge F^{ab}$. Hence computing the ranks of the terms of the  first row exact sequence of commutative diagram \eqref{dian} implies that $$\ker(\beta_{2})\cong (\ker(\alpha_2)\wedge\ker(\alpha_2))\oplus (\ker(\alpha_2)\otimes L_.^{ab}).$$
This row exact sequence yields the following long exact sequence of homotopy groups as follows:
$$\begin{array}{llll} \cdots&\rightarrow \pi_1(\ker(\beta_{2})&\rightarrow\pi_1(K_.^{ab}\wedge K_.^{ab})\rightarrow\pi_1(L_.^{ab}\wedge L_.^{ab})\\&\rightarrow\pi_0(\ker(\beta_{2})&\rightarrow\pi_0(K_.^{ab}\wedge K_.^{ab})\rightarrow\pi_0(L_.^{ab}\wedge L_.^{ab})\rightarrow 1.\end{array}
$$
Since $\gamma_2(L_.)/\gamma_3(L_.)$ is a free abelian group so every right term of above long exact sequence splits.
 Burns and Ellis, (1997) showed that $$\pi_0(K_.^{ab}\wedge K_.^{ab})\cong G^{ab}\wedge G^{ab}.$$
 Also we have $$\begin{array}{lll}\pi_0(K_.^{ab}\otimes K_.^{ab})&\cong H_0\big(N(K_.^{ab}\otimes K_.^{ab})\big)\ \ \ & ( by\ Theorem\ \ref{sg} \eqref{m})\\ & \cong H_0\big(N(K_.^{ab})\otimes N(K_.^{ab})\big)& (by\ Theorem\ \ref{sg} \eqref{n})\\ & \cong  H_0\big(N(K_.^{ab})\big)\otimes H_0\big(N(K_.^{ab})\big)&(by\ Kunneth\ formula)\\ & \cong  \pi_0(K_.^{ab})\otimes \pi_0( K_.^{ab})&(by\ Theorem\ \ref{sg} \eqref{m})\\&\cong G^{ab}\otimes G^{ab}.\end{array}$$
 Therefore we have $$\begin{array}{ll}\pi_0\big(\ker(\beta_{2})\big)&\cong\Big(\pi_0\big(\ker(\alpha_2)\big)\wedge
 \pi_0\big(\ker(\alpha_2)\big)\Big)
\oplus \pi_0\big(\ker(\alpha_2)\otimes L_.^{ab}\big)\\ &\cong(\frac{N}{[N,G]}\wedge\frac{N}{[N,G]})\oplus (\frac{N}{[N,G]}\otimes (\frac{G}{N})^{ab}). \end{array}$$
\end{proof}
\begin{thm}
For each group $G$ and a normal subgroup $N$, there exists a functorial group $V(G,N)$ which fits into the following natural exact sequence
$$\begin{array}{ll}&V(G,N)\oplus Tor(\frac{N}{[N,G]},\frac{G}{N}) \oplus (M^{(1)}(G,N)\otimes (\frac{G}{N})^{ab})\oplus (\frac{N}{[N,G]}\otimes M^{(1)}(\frac{G}{N}))\\ &\rightarrow M^{(2)}(G,N) \rightarrow M^{(1)}(G,N) \rightarrow \frac{N}{[N,G]}\wedge\frac{N}{[N,G]}\oplus \frac{N}{[N,G]}\otimes (\frac{G}{N})^{ab}\\&\rightarrow \frac{N}{\gamma_{3}(G,N)}\rightarrow \frac{N}{[G,N]}\rightarrow 1\end{array}$$
\end{thm}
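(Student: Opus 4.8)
The plan is to read off the theorem from the long exact sequence \eqref{eq} specialized to $n=2$,
$$\cdots\rightarrow\pi_1\big(\ker(\beta_{2})\big)\rightarrow M^{(2)}(G,N)\rightarrow M^{(1)}(G,N)\rightarrow \pi_0\big(\ker(\beta_{2})\big)\rightarrow \frac{N}{\gamma_{3}(G,N)}\rightarrow \frac{N}{[G,N]}\rightarrow 1,$$
since every term except $\pi_1(\ker\beta_2)$ and $\pi_0(\ker\beta_2)$ is already in the shape demanded by the statement (recall $\gamma_2(G,N)=[N,G]$). The term $\pi_0(\ker\beta_2)$ is supplied verbatim by the preceding lemma, $\pi_0(\ker\beta_2)\cong \frac{N}{[N,G]}\wedge\frac{N}{[N,G]}\oplus\frac{N}{[N,G]}\otimes(\frac{G}{N})^{ab}$, which is exactly the fourth term of the claimed sequence. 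So the whole task reduces to computing $\pi_1(\ker\beta_2)$ and recognizing it as the fourfold direct sum appearing on the left.

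For this I would reuse the decomposition already obtained inside the proof of the preceding lemma: because each $K_n$ is free, $\gamma_2(K_n)/\gamma_3(K_n)\cong K_n^{ab}\wedge K_n^{ab}$ naturally (similarly for $L_.$), and $K_n^{ab}\twoheadrightarrow L_n^{ab}$ splits, whence, compatibly with the simplicial operators,
$$\ker(\beta_2)\cong\big(\ker(\alpha_2)\wedge\ker(\alpha_2)\big)\oplus\big(\ker(\alpha_2)\otimes L_.^{ab}\big).$$
Since the homotopy groups of a direct product of simplicial groups split as the direct sum of the homotopy groups, this gives $\pi_1(\ker\beta_2)\cong\pi_1\big(\ker(\alpha_2)\wedge\ker(\alpha_2)\big)\oplus\pi_1\big(\ker(\alpha_2)\otimes L_.^{ab}\big)$, and I would simply \emph{define} $V(G,N):=\pi_1\big(\ker(\alpha_2)\wedge\ker(\alpha_2)\big)$. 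Functoriality of $V(-,-)$, and independence of the chosen free simplicial resolution, are inherited from the corresponding properties of the resolution used throughout Section~3: a morphism of pairs induces a map of the simplicial data, hence a map of $\ker(\alpha_2)$, hence of $\ker(\alpha_2)\wedge\ker(\alpha_2)$, hence on $\pi_1$.

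The remaining summand $\pi_1\big(\ker(\alpha_2)\otimes L_.^{ab}\big)$ is treated exactly as the $\pi_0$-computation of the preceding lemma, but now in degree $1$: both $\ker(\alpha_2)$ and $L_.^{ab}$ are simplicial \emph{abelian} groups whose Moore complexes are degreewise free abelian, so Theorem~\ref{sg}\,\eqref{m} and Theorem~\ref{sg}\,\eqref{n} give $\pi_1\big(\ker(\alpha_2)\otimes L_.^{ab}\big)\cong H_1\big(N(\ker(\alpha_2))\otimes N(L_.^{ab})\big)$, and the Künneth formula splits this, in degree $1$, as
$$\mathrm{Tor}\big(\pi_0(\ker\alpha_2),\pi_0(L_.^{ab})\big)\oplus\big(\pi_1(\ker\alpha_2)\otimes\pi_0(L_.^{ab})\big)\oplus\big(\pi_0(\ker\alpha_2)\otimes\pi_1(L_.^{ab})\big).$$
Using Franco's identification $\pi_0(\ker\alpha_2)\cong\frac{N}{[N,G]}$ (the abelian variety case of $\pi_0(\ker(\alpha/V(\alpha)))\cong\frac{N}{[NV^*G]}$), the definition $\pi_1(\ker\alpha_2)\cong M^{(1)}(G,N)$, and $\pi_0(L_.^{ab})\cong(\frac{G}{N})^{ab}$, $\pi_1(L_.^{ab})\cong M^{(1)}(\frac{G}{N})$, this summand becomes $\mathrm{Tor}\big(\frac{N}{[N,G]},\frac{G}{N}\big)\oplus\big(M^{(1)}(G,N)\otimes(\frac{G}{N})^{ab}\big)\oplus\big(\frac{N}{[N,G]}\otimes M^{(1)}(\frac{G}{N})\big)$, where $\mathrm{Tor}\big(\frac{N}{[N,G]},\frac{G}{N}\big)$ abbreviates $\mathrm{Tor}\big(\frac{N}{[N,G]},(\frac{G}{N})^{ab}\big)$. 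Substituting this and the lemma's value of $\pi_0(\ker\beta_2)$ into \eqref{eq} at $n=2$ yields the displayed sequence; its naturality follows from the naturality of the long exact sequence of a fibration, of the Künneth sequence, and of the decompositions used.

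I expect the one genuine point of care to be the justification that the degreewise splitting $\ker(\beta_2)\cong(\ker\alpha_2\wedge\ker\alpha_2)\oplus(\ker\alpha_2\otimes L_.^{ab})$ is compatible with faces and degeneracies, so that one may legitimately pass to $\pi_1$, together with verifying the freeness/flatness hypotheses needed to apply Theorem~\ref{sg}\,\eqref{n} and the Künneth formula to the Moore complexes of $\ker\alpha_2$ and $L_.^{ab}$; the rest is bookkeeping with \eqref{eq}. The group $V(G,N)=\pi_1(\ker\alpha_2\wedge\ker\alpha_2)$ is, as the statement allows, described only abstractly.
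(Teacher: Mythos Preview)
Your proposal is correct and follows essentially the same route as the paper: specialize \eqref{eq} to $n=2$, take $\pi_0(\ker\beta_2)$ from the preceding lemma, and compute $\pi_1(\ker\beta_2)$ via the decomposition $\ker(\beta_2)\cong(\ker\alpha_2\wedge\ker\alpha_2)\oplus(\ker\alpha_2\otimes L_.^{ab})$ together with Theorem~\ref{sg}\,\eqref{m},\,\eqref{n} and the K\"unneth formula, defining $V(G,N):=\pi_1(\ker\alpha_2\wedge\ker\alpha_2)$. If anything, you are more careful than the paper about the simplicial compatibility of the splitting and the functoriality of $V(G,N)$, which the paper leaves implicit.
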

\begin{proof}
It sufficient to compute $\pi_1\big(\ker(\beta_{2})\big)$ and replace it in the long exact sequence \eqref{eq}.
  Similar to the proof of the above lemma the K\"{u}nneth formula implies that  $$\begin{array}{ll}\pi_1\big(\ker(\beta_{2})\big)&\cong \pi_1\big(\ker(\alpha_2)\wedge\ker(\alpha_2)\big) \oplus H_1\Big(N\big(\ker(\alpha_2)\big)\otimes N(L^{ab})\Big)\\ &\cong \pi_1\big(\ker(\alpha_2)\wedge\ker(\alpha_2)\big) \oplus M(G,N)\otimes (\frac{G}{N})^{ab}\\&\oplus \frac{N}{[N,G]}\otimes M(\frac{G}{N})\oplus Tor(\frac{N}{[N,G]},\frac{G}{N}).\end{array}$$
   The group $V(G,N)$ is defined as an abelian group $ \pi_1\big(\ker(\alpha_2)\wedge\ker(\alpha_2)\big).$
\end{proof}

Note that if we put $N=G$ in the above theorem, then we have the natural exact sequence which is proved in Burns and Ellis (1997).

\section{The Baer invariant of a pair of the free product of groups}
In this section we study the behavior of the Baer invariant of a pair of the free product of groups.
For $i=1,2$, assume that $\alpha_i:K_{i_.}\rightarrow L_{i_.}$ are epimorphisms, where $K_{i_.}$ and $L_{i_.}$ are free simplicial resolutions corresponding to $G_i$ and $G_i/N_i$, respectively. Van-Kampen theorem for simplicial groups implies that $K_{1_.}\ast K_{2_.}$ and $L_{1_.}\ast L_{2_.}$ are the free simplicial groups corresponding to $G_1\ast G_2$ and $(G_1/N_1\ast G_2/N_2)\cong\frac{G_1*G_2}{\langle N_1\ast N_2\rangle^{G_1\ast G_2}}$, respectively. So we can consider $\beta=~\alpha_1\ast\alpha_2$ as the corresponding epimorphism from $K_{1_.}\ast K_{2_.}$ onto $L_{1_.}\ast L_{2_.}$ (see Burns and Ellis, 1997).

Consider the following exact sequences of simplicial groups
\[1\rightarrow\ker\big(\alpha_1/V(\alpha_1)\big)\rightarrow \frac{K_{1_.}}{V(K_{1_.})} \stackrel{\alpha_1/V(\alpha_1)}{\rightarrow}\frac{L_{1_.}}{V(L_{1_.})}\rightarrow1,\]

\[1\rightarrow\ker\big(\alpha_2/V(\alpha_2)\big)\rightarrow \frac{K_{2_.}}{V(K_{2_.})}\stackrel{\alpha_2/V(\alpha_2)}{\rightarrow} \frac{L_{2_.}}{V(L_{2_.})}\rightarrow 1,\]
\begin{equation}\label{free}1\rightarrow\ker\big(\beta/V(\beta)\big)\rightarrow \frac{K_{1_.}\ast K_{2_.}}{V(K_{1_.}\ast K_{2_.})}\stackrel{\beta/V(\beta)}{\rightarrow} \frac{L_{1_.}\ast L_{2_.}}{V(L_{1_.}\ast L_{2_.})}\rightarrow 1.\end{equation}
Therefore we have the following commutative diagram
\begin{equation}\tiny{\label{dia}\begin{array}{cccccccccc}
 &&1&&1&&1\\&&\downarrow&&\downarrow&&\downarrow&&\\1&\rightarrow& \ker(\theta)&\rightarrow &\ker(\varphi_V)&\stackrel{\theta}{\rightarrow}& \ker(\psi_V)&\rightarrow&1\\
 &&\downarrow&&\downarrow&&\downarrow&&\\ 1&\rightarrow&\ker(\frac{\beta}{V(\beta)})&\rightarrow& \frac{K_{1_.}\ast K_{2_.}}{V(K_{1_.}\ast K_{2_.})} &\stackrel{\beta/V(\beta)}{\rightarrow}&\frac{L_{1_.}\ast L_{2_.}}{V(L_{1_.}\ast L_{2_.})}&\rightarrow&1\\&&\varphi_V|\downarrow\ \ \ \ \ \ &&{\varphi_V}\downarrow \ \ \ \ \ &&\psi_V\downarrow\ \ \ \ \ &&\\1& \rightarrow&\ker(\frac{\alpha_1}{V(\alpha_1)})\times\ker(\frac{\alpha_2}{V(\alpha_2)})&\rightarrow&
 \frac{K_{1_.}}{V(K_{1_.})}\times\frac{K_{2_.}}{V(K_{2_.})}&\stackrel{(\alpha_1,\alpha_2)}{\rightarrow}&
 \frac{L_{1_.}}{V(L_{1_.})}\times\frac{L_{2_.}}{V(L_{2_.})}&\rightarrow&1\\&&\downarrow&&\downarrow&&\downarrow&&\\
 &&1&&1&&1
 \end{array}}\end{equation}

Definition of $\varphi_V$ implies that $\varphi_V|$ is an epimorphism, and the epimorphism $\varphi_V|$ yields the epimorphism $\theta$.
Since every epimorphism of simplicial groups is a fiberation, the left column exact sequence of \eqref{dia} induces the following long exact sequence of abelian groups
\begin{equation}\label{k}
\cdots\rightarrow \pi_n\big((\ker(\theta)\big)\rightarrow\pi_n\big(\ker \frac{\beta}{V(\beta)}\big)\rightarrow\pi_n \big(\ker\frac{\alpha_1}{V(\alpha_1)}\big)\oplus\pi_n\big(\ker\frac{\alpha_2}{V(\alpha_2)}\big)\rightarrow \cdots
\end{equation}
Consider the natural homomorphisms $\iota_i:K_{i_.}/V(K_{i_.})\rightarrow K_{1_.}\ast K_{2_.}/V(K_{1_.}\ast~K_{2_.})$ for $i=1,2$. Since $\pi_n\big(K_{1_.}/V(K_{1_.})\big)\oplus\pi_n\big({K_{2_.}}/{V(K_{2_.})}\big)$ is a coproduct in the category of abelian groups for all $n>0$, there exists
 $${\delta_V}_n: \pi_n\big(K_{1_.}/{V(K_{1_.})}\big)\oplus\pi_n\big(K_{2_.}/{V(K_{2_.})}\big)\rightarrow\pi_n\big(K_{1_.}\ast~K_{2_.}/V(K_{1_.}\ast K_{2_.})\big)$$ such that $ {\varphi_V}_n\circ\pi_n(\delta_V)=id$. Similarly we have
 $$\tau_i:L_{i_.}/V(L_{i_.})\rightarrow L_{1_.}\ast~L_{2_.}/V(L_{1_.}~\ast~L_{2_.}),$$ for $i=1,2$, therefore there exists  $${\sigma_V}_n:\pi_n\big(L_{1_.}/{V(L_{1_.})}\big)\oplus\pi_n\big(L_{2_.}/{V(L_{2_.})}\big)\rightarrow\pi_n\big(L_{1_.}
 \ast L_{2_.}/V(L_{1_.}\ast L_{2_.})\big)$$ such that $ {\psi_V}_n\circ\pi_n(\sigma_V)=id$.

  The following commutative diagram
$$\begin{array}{ccccccc}
    \frac{K_{1_.}\ast K_{2_.}}{V(K_{1_.}\ast K_{2_.})} &\stackrel{\beta/V(\beta)}{\rightarrow}&\frac{L_{1_.}\ast L_{2_.}}{V(L_{1_.}\ast L_{2_.})}&\rightarrow&1\\ \uparrow&&\uparrow&&\\
 \frac{K_{i_.}}{V(K_{i_.})}&\stackrel{\alpha_i}{\rightarrow}&
 \frac{L_{i_.}}{V(L_{i_.})}&\rightarrow&1
 \end{array}$$
 gives rise $\iota_i|:\ker \big(\alpha_i/V(\alpha_i)\big) \rightarrow \ker\big(\beta/V(\beta)\big)$. Similarly there exists $$\eta:\pi_n\Big(\ker\big({\alpha_1}/{V(\alpha_1)}\big)\Big)\oplus\pi_n\Big(\ker\big({\alpha_2}/{V(\alpha_2)}\big)\Big)
 \rightarrow \pi_n\Big(\ker\big(\beta/V(\beta)\big)\Big)$$ such that $ {\varphi_V|}_n\circ\pi_n(\delta_V|)=id.$\\
Consequently, for all $n>0$, the exact sequence of \eqref{k} splits, therefore
\begin{equation}\label{ker}
\pi_n \big(\ker(\theta)\big)\oplus\pi_n \big(\ker\frac{\alpha_1}{V(\alpha_1)}\big)\oplus\pi_n\big(\ker\frac{\alpha_2}{V(\alpha_2)}\big)\cong\pi_n\big(\ker \frac{\beta}{V(\beta)}\big).
\end{equation}
For $n=1$, using some isomorphisms, we can rewrite \eqref{ker} as follows:
$$ \mathcal{V}M(G_1*G_2,\langle N_1\ast N_2\rangle^{G_1\ast G_2})\cong\mathcal{V}M(G_1,N_1)\oplus\mathcal{V}M(G_2,N_2)\oplus D, $$
where $D$ is defined as an abelian group $\pi_1 (\ker(\theta)).$

Now by the above notations we are in a position to state and prove the following theorem.
\begin{thm}\label{12}
Let $(G_i,N_i)$ be pairs
of groups for $i=1,2$, then

 $ \begin{array}{ll}
 (i)\ \ \ \ M(G_1 \ast G_2,\langle N_1\ast N_2\rangle^{G_1\ast G_2})&\cong M(G_1,N_1)\oplus M(G_2,N_2).\\
 (ii)\  M^{(2)}(G_1\ast G_2,\langle N_1\ast N_2\rangle^{G_1\ast G_2})&\cong M^{(2)}(G_1,N_1)\oplus M^{(2)}(G_2,N_2)\\&\oplus M(G_1,N_1)\otimes\frac{N_2}{[N_2,G_2]}\\
 &\oplus M(G_2,N_2)\otimes \frac{N_1}{[N_1,G_1]}\\
 &\oplus  M(\frac{G_2}{N_2})\otimes\frac{N_1}{[N_1,G_1]}\\
 &\oplus M(\frac{G_1}{N_1})\otimes\frac{N_2}{[N_2,G_2]}\\
 &\oplus (\frac{G_1}{N_1})^{ab}\otimes M(G_2,N_2)\\
 &\oplus (\frac{G_2}{N_2})^{ab}\otimes M(G_1,N_1)\\
 &\oplus Tor(\frac{N_1}{[N_1,G_1]},\frac{N_2}{[N_2,G_2]})\\
 &\oplus Tor\big((\frac{G_1}{N_1})^{ab},\frac{N_2}{[N_2,G_2]}\big)\\
 &\oplus Tor\big((\frac{G_2}{N_2})^{ab},\frac{N_1}{[N_1,G_1]}\big).\end{array}$
\end{thm}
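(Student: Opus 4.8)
The plan is to read everything off the splitting \eqref{ker} at $n=1$, which already yields
$$\mathcal{V}M\big(G_1\ast G_2,\langle N_1\ast N_2\rangle^{G_1\ast G_2}\big)\cong\mathcal{V}M(G_1,N_1)\oplus\mathcal{V}M(G_2,N_2)\oplus D,\qquad D:=\pi_1\big(\ker(\theta)\big),$$
so that the whole statement reduces to identifying $D$ for the variety $\mathcal{A}$ of abelian groups (part $(i)$, where $\mathcal{V}M=M$) and for the variety $\mathcal{N}_2$ of groups of nilpotency class at most $2$ (part $(ii)$, where $\mathcal{V}M=M^{(2)}$). Recall from the diagram \eqref{dia} that $\ker(\theta)$ is the kernel of $\theta\colon\ker(\varphi_V)\to\ker(\psi_V)$, where $\ker(\varphi_V)$ and $\ker(\psi_V)$ are the kernels of the canonical maps from $\frac{K_{1_.}\ast K_{2_.}}{V(K_{1_.}\ast K_{2_.})}$ to $\frac{K_{1_.}}{V(K_{1_.})}\times\frac{K_{2_.}}{V(K_{2_.})}$ and from $\frac{L_{1_.}\ast L_{2_.}}{V(L_{1_.}\ast L_{2_.})}$ to $\frac{L_{1_.}}{V(L_{1_.})}\times\frac{L_{2_.}}{V(L_{2_.})}$.

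For $(i)$ take $\mathcal{V}=\mathcal{A}$, so $V(H)=[H,H]$ for every group $H$. Then $\varphi_V$ and $\psi_V$ are precisely the canonical isomorphisms $(K_{1_.}\ast K_{2_.})^{ab}\cong K_{1_.}^{ab}\times K_{2_.}^{ab}$ and $(L_{1_.}\ast L_{2_.})^{ab}\cong L_{1_.}^{ab}\times L_{2_.}^{ab}$; hence $\ker(\varphi_V)=\ker(\psi_V)=1$, and the top row of \eqref{dia} forces $\ker(\theta)=1$, so $D=0$. This proves $(i)$.

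For $(ii)$ take $\mathcal{V}=\mathcal{N}_2$, so $V(H)=\gamma_3(H)$. The first step is to identify $\ker(\varphi_V)$ as a simplicial abelian group; note it lies in the central, hence abelian, subgroup $\gamma_2/\gamma_3$ of the class-$2$ quotient. Since each $K_{i,n}$ and each $K_{1,n}\ast K_{2,n}$ is a free group, $\gamma_2/\gamma_3$ of such a group is naturally the exterior square of its abelianization; combining this with the natural isomorphism $(A\oplus B)\wedge(A\oplus B)\cong (A\wedge A)\oplus(A\otimes B)\oplus(B\wedge B)$ applied to $(K_{1_.}\ast K_{2_.})^{ab}=K_{1_.}^{ab}\oplus K_{2_.}^{ab}$ shows that $\ker(\varphi_V)\cong K_{1_.}^{ab}\otimes K_{2_.}^{ab}$ and, likewise, $\ker(\psi_V)\cong L_{1_.}^{ab}\otimes L_{2_.}^{ab}$, compatibly with $\theta$, which under these identifications becomes $\alpha_1^{ab}\otimes\alpha_2^{ab}$. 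Choosing the free simplicial resolutions so that each $\alpha_i\colon K_{i_.}\to L_{i_.}$ admits a section of simplicial groups (possible since $L_{i_.}$ is free), one obtains $K_{i_.}^{ab}\cong R_i\oplus L_{i_.}^{ab}$ as simplicial abelian groups, where $R_i:=\ker(\alpha_i^{ab})$, with $\pi_0(R_i)\cong\frac{N_i}{[N_i,G_i]}$ and $\pi_1(R_i)\cong M(G_i,N_i)$. Substituting the two splittings and observing that $\alpha_1^{ab}\otimes\alpha_2^{ab}$ becomes the projection onto $L_{1_.}^{ab}\otimes L_{2_.}^{ab}$,
$$\ker(\theta)\cong (R_1\otimes R_2)\ \oplus\ (R_1\otimes L_{2_.}^{ab})\ \oplus\ (L_{1_.}^{ab}\otimes R_2).$$

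It remains to apply $\pi_1$. By Theorem \ref{sg} \eqref{m} and \eqref{n} together with the K\"{u}nneth formula — exactly as in the computations of Section~3 — for simplicial abelian groups $A_.,B_.$ one has
$$\pi_1(A_.\otimes B_.)\cong\big(\pi_0(A_.)\otimes\pi_1(B_.)\big)\oplus\big(\pi_1(A_.)\otimes\pi_0(B_.)\big)\oplus Tor\big(\pi_0(A_.),\pi_0(B_.)\big).$$
Using $\pi_0(L_{i_.}^{ab})\cong(\frac{G_i}{N_i})^{ab}$, $\pi_1(L_{i_.}^{ab})\cong M(\frac{G_i}{N_i})$ together with the values of $\pi_0(R_i)$ and $\pi_1(R_i)$ above, the summand $R_1\otimes R_2$ contributes $M(G_1,N_1)\otimes\frac{N_2}{[N_2,G_2]}$, $M(G_2,N_2)\otimes\frac{N_1}{[N_1,G_1]}$ and $Tor(\frac{N_1}{[N_1,G_1]},\frac{N_2}{[N_2,G_2]})$, while $R_1\otimes L_{2_.}^{ab}$ and $L_{1_.}^{ab}\otimes R_2$ supply the remaining six terms (the $M(\frac{G_i}{N_i})$-terms, the $(\frac{G_i}{N_i})^{ab}$-terms, and the two corresponding $Tor$-terms). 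Hence $D$ is the asserted direct sum, and feeding this back into the reduction of the first paragraph (now with $\mathcal{V}M=M^{(2)}$) gives $(ii)$. The one genuinely delicate point is the identification of $\ker(\varphi_V)$ with the tensor product and the compatible choice of sections of the $\alpha_i$ — equivalently, making sure all the connecting homomorphisms vanish so that the displayed splitting of $\ker(\theta)$ is honest; once that is secured, the K\"{u}nneth bookkeeping is entirely routine.
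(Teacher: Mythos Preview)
Your proof is correct and follows essentially the same strategy as the paper's own proof: reduce to the splitting \eqref{ker}, observe that $D=\pi_1(\ker\theta)$ is trivial for $\mathcal{V}=\mathcal{A}$ because $\varphi_V$ and $\psi_V$ are isomorphisms, and for $\mathcal{V}=\mathcal{N}_2$ identify $\ker(\varphi_V)\cong K_{1_.}^{ab}\otimes K_{2_.}^{ab}$, $\ker(\psi_V)\cong L_{1_.}^{ab}\otimes L_{2_.}^{ab}$, decompose $\ker\theta$ into three tensor summands, and finish with the K\"unneth formula via Theorem~\ref{sg}\eqref{m},\eqref{n}. The only cosmetic differences are that the paper cites Burns--Ellis (1997) for the tensor identification of $\ker(\varphi_V)$ (where you sketch it via $\gamma_2/\gamma_3\cong\wedge^2$ of the abelianization), and that the paper obtains the three-term decomposition of $\ker\theta$ by a rank count in each degree (using that all the simplicial abelian groups involved are degreewise free abelian), whereas you argue it by choosing a simplicial section of $\alpha_i$; both routes justify the same splitting, and you rightly flag this as the one point needing care.
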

\begin{proof}
(i) Let $\mathcal{V}$ be the variety of abelian groups. We have $(K_{1_.}\ast K_{2_.})^{ab}\cong K_{1_.}^{ab}\oplus K_{2_.}^{ab}$ since $K_{1_.}$ and $K_{2_.}$ are free groups. Hence in we have \eqref{free} $\ker\big(\beta/\gamma_2(\beta)\big)\cong \ker\big(\alpha_1/\gamma_2(\alpha_1)\big)\oplus\ker\big(\alpha_2/\gamma_2(\alpha_2)\big)$. Therefore the exact sequence \eqref{ker} implies that $$\begin{array}{ll}M(G_1 \ast G_2,\langle N_1\ast N_2\rangle^{G_1\ast G_2})&\cong\pi_1\Big(\ker\big(\beta/\gamma_2(\beta)\big)\Big)\\&\cong \pi_1\Big(\ker\big(\alpha_1/\gamma_2(\alpha_1)\big)\Big)\oplus\pi_1\Big(\ker\big(\alpha_2/\gamma_2(\alpha_2)\big)\Big)
.\end{array}$$

(ii) Let $\mathcal{V}$ be the variety of nilpotent groups of class at most 2. Burns and Ellis (1997) proved the isomorphisms $\ker(\varphi)\cong (K_{1_.})^{ab}\otimes(K_{2_.})^{ab}$ and $\ker(\psi)\cong (L_{1_.})^{ab}\otimes(L_{2_.})^{ab}$. Hence we have $\ker\big(\theta/\gamma_3(\theta)\big)\cong\ker(\frac{\alpha_1}{\gamma_2(\alpha_1)}
\otimes\frac{\alpha_2}{\gamma_2(\alpha_2)})$.

Since $(K_{1_.})^{ab}\otimes(K_{2_.})^{ab}$ and $(L_{1_.})^{ab}\otimes(L_{2_.})^{ab}$ are free abelian simplicial groups, by computing the ranks of free abelian groups in an exact sequence, we can obtain $\ker\big(\theta/\gamma_3(\theta)\big)$, as follows:
$$\begin{array}{ll}\ker(\frac{\alpha_1}{\gamma_2(\alpha_1)}\otimes\frac{\alpha_2}{\gamma_2(\alpha_2)})&
 \cong\ker(\frac{\alpha_1}{\gamma_2(\alpha_1)}\big)\otimes\ker\big(\frac{\alpha_2}{\gamma_2(\alpha_2)})\\
&\oplus\frac{L_{1_.}}{\gamma_2(L_{1_.})}\otimes\ker(\frac{\alpha_2}{\gamma_2(\alpha_2)})\\ &\oplus\ker(\frac{\alpha_1}{\gamma_2(\alpha_1)})\otimes\frac{L_{2_.}}{\gamma_2(L_{2_.}).}\end{array}$$

Now we compute the fundamental group of $\ker(\frac{\alpha_1}{\gamma_2(\alpha_1)}\otimes\frac{\alpha_2}{\gamma_2(\alpha_2)})$. First we obtain $\pi_1\Big(\ker(\frac{\alpha_1}{\gamma_2(\alpha_1)})\otimes\ker(\frac{\alpha_2}{\gamma_2(\alpha_2)})\Big)$ in more details. By Theorem \ref{sg} \eqref{m} we must obtain $H_1\big(N\big(\ker(\frac{\alpha_1}{\gamma_2(\alpha_1)})\otimes\ker(\frac{\alpha_2}{\gamma_2(\alpha_2)})\big)\big)$. Theorem \ref{sg} \eqref{n} and K\"{u}nneth formula imply that
$$\begin{array}{ll}
 \pi_1\big(\ker(\frac{\alpha_1}{\gamma_2(\alpha_1)})\otimes\ker(\frac{\alpha_2}{\gamma_2(\alpha_2)})\big)
 &\cong \pi_1\big(\ker(\frac{\alpha_1}{\gamma_2(\alpha_1)})\big)\otimes \pi_0 \big(\ker(\frac{\alpha_2}{\gamma_2(\alpha_2)})\big)\\
 &\oplus \pi_0\big(\ker(\frac{\alpha_1}{\gamma_2(\alpha_1)})\big)\otimes \pi_1 \big(\ker(\frac{\alpha_2}{\gamma_2(\alpha_2)})\big)\\
 &\oplus Tor\Big(\pi_0\big(\ker(\frac{\alpha_1}{\gamma_2(\alpha_1)})\big), \pi_0 \big(\ker(\frac{\alpha_2}{\gamma_2(\alpha_2)})\big)\Big)\\
 &\cong M(G_1,N_1)\otimes\frac{N_2}{[N_2,G_2]}\\
 &\oplus \frac{N_1}{[N_1,G_1]}\otimes M(G_2,N_2)\\
 &\oplus Tor(\frac{N_1}{[N_1,G_1]},\frac{N_2}{[N_2,G_2]}).\end{array}$$
 Similarly we have
 $$\begin{array}{ll}\pi_1\big(\frac{L_{1_.}}{\gamma_2(L_{1_.})}\otimes\ker(\frac{\alpha_2}{\gamma_2(\alpha_2)})\big)
 &\cong M(\frac{G_1}{N_1})\otimes\frac{N_2}{[N_2,G_2]}\\
 &\oplus (\frac{G_1}{N_1})^{ab}\otimes M(G_2,N_2)\\
 &\oplus Tor\big((\frac{G_1}{N_1})^{ab},\frac{N_2}{[N_2,G_2]}\big).\end{array}$$
 Also
 $$\begin{array}{ll}\pi_1\big(\frac{L_{2_.}}{\gamma_2(L_{2_.})}\otimes\ker(\frac{\alpha_1}{\gamma_2(\alpha_1)})\big)
 &\cong  M(\frac{G_2}{N_2})\otimes\frac{N_1}{[N_1,G_1]}\\
 &\oplus (\frac{G_2}{N_2})^{ab}\otimes M(G_1,N_1)\\
 &\oplus Tor\big((\frac{G_2}{N_2})^{ab},\frac{N_1}{[N_1,G_1]}\big).\end{array}$$
Now by replacing the above isomorphisms in \eqref{ker}, we conclude the following isomorphism:
 $$\begin{array}{ll}
  M^{(2)}(G_1\ast G_2,\langle N_1\ast N_2\rangle^{G_1\ast G_2})&\cong M^{(2)}(G_1,N_1)\oplus M^{(2)}(G_2,N_2)\\
 &\oplus M(G_1,N_1)\otimes\frac{N_2}{[N_2,G_2]}\\
 &\oplus M(G_2,N_2)\otimes\frac{N_1}{[N_1,G_1]} \\
 &\oplus  M(\frac{G_2}{N_2})\otimes\frac{N_1}{[N_1,G_1]}\\
 &\oplus M(\frac{G_1}{N_1})\otimes\frac{N_2}{[N_2,G_2]}\\
 &\oplus (\frac{G_1}{N_1})^{ab}\otimes M(G_2,N_2)\\
 &\oplus (\frac{G_2}{N_2})^{ab}\otimes M(G_1,N_1)\\
 &\oplus Tor(\frac{N_1}{[N_1,G_1]},\frac{N_2}{[N_2,G_2]})\\
 &\oplus Tor\big((\frac{G_1}{N_1})^{ab},\frac{N_2}{[N_2,G_2]}\big)\\
 &\oplus Tor\big((\frac{G_2}{N_2})^{ab},\frac{N_1}{[N_1,G_1]}\big).\end{array}$$
\end{proof}
\begin{rem}
Theorems 4.1 and 3.2 imply that $M(G_1 \ast G_2)\cong M(G_1)\oplus M(G_2)$ and $M^{(2)}(G\ast H)\cong M^2(G)\oplus M^2(H)\oplus M(G)\otimes H^{ab}\oplus G^{ab}\otimes M(H)\oplus Tor(G^{ab},H^{ab})$ which are proved by Miller (1952), and by Burns and Ellis (1997), respectively.
Also note that the part $(i)$ of the above theorem is proved by Mirebrahimi and Mashayekhy.
\end{rem}
Now we intend to compute $M^{(c)}(G_1\ast G_2,\langle N_1\ast N_2\rangle^{G_1\ast G_2})$, for all $c\geq 1$, with some conditions.
\begin{thm}
Let $(G_i,N_i)$ be pairs of groups for $i=1,2$ such that $G_1/N_1$ and $G_2/N_2$ satisfy in the following conditions:
$$\begin{array}{llll} &{\frac{G_1}{N_1}}^{ab}\otimes {\frac{G_2}{N_2}}^{ab}&= M^{(1)}({\frac{G_1}{N_1}})\otimes M^{(1)}({\frac{G_2}{N_2}})&=Tor({\frac{G_1}{N_1}}^{ab},{\frac{G_2}{N_2}}^{ab})\\
=&{\frac{G_1}{N_1}}^{ab}\otimes H_3(\frac{G_2}{N_2})&=M^{(1)}({\frac{G_1}{N_1}})\otimes {\frac{G_2}{N_2}}^{ab}&=Tor\big({\frac{G_1}{N_1}}^{ab},M^{(1)}({\frac{G_2}{N_2}})\big)\\
  =& {\frac{G_2}{N_2}}^{ab}\otimes H_3({\frac{G_1}{N_1}})&=M^{(1)}({\frac{G_2}{N_2}})\otimes {\frac{G_1}{N_1}}^{ab} &=Tor\big({\frac{G_2}{N_2}}^{ab},M^{(1)}({\frac{G_1}{N_1}})\big)=0.\end{array}$$
  Also, let for $G_1$ and $G_2$ the following conditions hold:
$$G_1^{ab}\otimes G_2^{ab}=M^{(1)}(G_1)\otimes G_2^{ab}=M^{(1)}(G_2)\otimes G_1^{ab}=Tor(G_1^{ab},G_2^{ab})=0.$$
Then for all $c\geq 1$, we have the following isomorphism:
$$M^{(c)}(G_1\ast G_2,\langle N_1\ast N_2\rangle^{G_1\ast G_2})\cong M^{(c)}(G_1,N_1)\oplus M^{(c)}(G_2,N_2).$$
\end{thm}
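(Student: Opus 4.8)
The plan is to run the machinery of Section~4 with $\mathcal{V}=\mathcal{N}_c$, the variety of nilpotent groups of class at most $c$, so that $V(-)=\gamma_{c+1}(-)$ and $\mathcal{V}M(-)=M^{(c)}(-)$. Taking $n=1$ in the splitting \eqref{ker} gives
$$M^{(c)}(G_1\ast G_2,\langle N_1\ast N_2\rangle^{G_1\ast G_2})\cong M^{(c)}(G_1,N_1)\oplus M^{(c)}(G_2,N_2)\oplus\pi_1\big(\ker(\theta)\big),$$
so the entire theorem is equivalent to the assertion $\pi_1(\ker\theta)=0$. This is the same reduction that produced Theorem~\ref{12}(ii) when $c=2$; the extra work is to understand $\ker\theta$ for general $c$ and to see that the hypotheses force its first homotopy group to vanish.

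First I would describe $\ker\theta$. As in the proof of Theorem~\ref{12}(ii), since the $K_{i_.},L_{i_.}$ are free, $\ker(\varphi_V),\ker(\psi_V)$ in \eqref{dia} are the ``mixed parts'' of $\frac{K_{1_.}\ast K_{2_.}}{\gamma_{c+1}}$ and $\frac{L_{1_.}\ast L_{2_.}}{\gamma_{c+1}}$: their lower central filtrations have graded pieces the bidegree $(p,q)$ components of the free Lie ring on $X_1\sqcup X_2$ with $1\le p$, $1\le q$, $p+q\le c$, each such component being a polynomial bifunctor $L_{p,q}$ of $K_{1_.}^{ab},K_{2_.}^{ab}$ (resp.\ of $L_{1_.}^{ab},L_{2_.}^{ab}$). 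Repeated Lazard elimination writes $L_{p,q}$ as a finite direct sum of tensor products of Schur functors of the two variables, each summand involving both of them (for instance the bidegree-$(n,1)$ piece is $A^{\otimes n}\otimes B$). Splitting each variable degreewise through $1\to\ker(\alpha_i/\gamma_2(\alpha_i))\to K_{i_.}^{ab}\to L_{i_.}^{ab}\to1$ and retaining the summands with at least one kernel factor --- exactly the rank count carried out for $c=2$ --- exhibits $\ker\theta$, up to its lower central filtration, as a finite direct sum of polynomial-functor terms in the four simplicial abelian groups $\ker(\alpha_1/\gamma_2(\alpha_1))$, $L_{1_.}^{ab}$, $\ker(\alpha_2/\gamma_2(\alpha_2))$, $L_{2_.}^{ab}$, every term mixing the two indices.

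Then I would compute $\pi_1$. Using Theorem~\ref{sg}\eqref{m},\eqref{n}, the K\"unneth formula, and the Dold--Puppe description of the homotopy of a polynomial functor evaluated on a free simplicial resolution, $\pi_1$ of each such term is a finite iterated sum of groups $P\otimes Q$ and $\mathrm{Tor}(P,Q)$ in which $P$ is a degree-$0$ or degree-$1$ homotopy group of a Schur functor of $K_{1_.}^{ab}$, $L_{1_.}^{ab}$ or $\ker(\alpha_1/\gamma_2(\alpha_1))$, and $Q$ is the analogue on the index-$2$ side. Since $\pi_1$ of a functor depends only on $\pi_0,\pi_1$ of its argument, since $\pi_1(K_{i_.}^{ab})=M^{(1)}(G_i)$, $\pi_1(L_{i_.}^{ab})=M^{(1)}(G_i/N_i)$, and since by \eqref{long} (with the variety of abelian groups) the relative groups $\pi_0(\ker(\alpha_i/\gamma_2(\alpha_i)))=N_i/[N_i,G_i]$ and $\pi_1(\ker(\alpha_i/\gamma_2(\alpha_i)))=M^{(1)}(G_i,N_i)$ are extensions of subgroups and quotients of $G_i^{ab}$, $M^{(1)}(G_i)$, $M^{(1)}(G_i/N_i)$, $H_3(G_i/N_i)$, every such $P$ and $Q$ is built by $\otimes,\mathrm{Tor}$ out of $G_i^{ab}$, $M^{(1)}(G_i)$, $(G_i/N_i)^{ab}$, $M^{(1)}(G_i/N_i)$, $H_3(G_i/N_i)$. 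Using right exactness of $\otimes$, the long exact sequences for $\mathrm{Tor}$, and the fact that a non-constant Schur functor of an abelian group has the same prime divisors as the group (so torsion-disjointness and divisibility pass to all these pieces), each $P\otimes Q$ and $\mathrm{Tor}(P,Q)$ is forced to equal one of the products listed in the hypotheses and hence vanishes; so $\pi_1(\ker\theta)=0$.

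The main obstacle is precisely this last bookkeeping: one has to check that the finite list of vanishing hypotheses annihilates \emph{every} tensor and $\mathrm{Tor}$ product that actually occurs, with no higher homology $H_{\ge 4}(G_i/N_i)$ and no exotic derived functor ($L^{S^2}_\ast$, $L^{\Lambda^2}_\ast$, \dots) of the input groups slipping through uncontrolled. The cleanest way to organise it is an induction on $c$: the base case $c=1$ is Theorem~\ref{12}(i), and the inductive step compares the exact sequence \eqref{eq} (with $n=c$) for $(G_1\ast G_2,\langle N_1\ast N_2\rangle^{G_1\ast G_2})$ against the direct sum of the sequences \eqref{eq} for $(G_1,N_1)$ and $(G_2,N_2)$ induced by $G_i\hookrightarrow G_1\ast G_2$; the $M^{(c-1)}(-,-)$ and $N/\gamma_c(-,-)$ columns agree by induction, so by the five lemma it suffices to show that $\pi_0$ and $\pi_1$ of the mixed summand of $\ker(\beta_c)$ vanish. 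The degree-$0$ part is immediate: a short Jacobi-identity induction shows that $G_1^{ab}\otimes G_2^{ab}=0$ already forces every mixed bracket in the free Lie ring on $G_1^{ab}\oplus G_2^{ab}$ to be zero. The degree-$1$ part is then a weight-$c$ version of the computation in Theorem~\ref{12}(ii), killed by the remaining hypotheses as above. Finally, specialising $N_i=G_i$ should recover the known free-product formula for the $c$-nilpotent multiplier of a group under the corresponding hypotheses.
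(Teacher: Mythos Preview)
Your reduction to $\pi_1(\ker\theta)=0$ via \eqref{ker} matches the paper, but from there the paper takes a much shorter path than either of your two routes. Rather than decomposing $\ker\theta$ itself, the paper applies the long exact homotopy sequence to the \emph{top row} of \eqref{dia}, extracting
\[
\pi_2(\ker\psi_c)\;\longrightarrow\;\pi_1(\ker\theta)\;\longrightarrow\;\pi_1(\ker\varphi_c),
\]
and then kills the two ends separately. The point is that $\ker\varphi_c$ is built entirely from $K_{1_.}^{ab},K_{2_.}^{ab}$ and $\ker\psi_c$ entirely from $L_{1_.}^{ab},L_{2_.}^{ab}$; the relative objects $\ker(\alpha_i/\gamma_2(\alpha_i))$ never enter. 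An induction on $c$ through the filtration $\ker\psi_c\twoheadrightarrow\ker\psi_{c-1}$, whose graded pieces are direct sums of mixed tensors $(L_{1_.}^{ab})^{\otimes i}\otimes(L_{2_.}^{ab})^{\otimes j}$, together with K\"unneth, reduces $\pi_2(\ker\psi_c)=0$ to the vanishing of $\pi_0,\pi_1,\pi_2$ of $L_{1_.}^{ab}\otimes L_{2_.}^{ab}$; these three groups are \emph{exactly} the nine listed $\otimes$/Tor combinations of $(G_i/N_i)^{ab}$, $M^{(1)}(G_i/N_i)$, $H_3(G_i/N_i)$. Similarly $\pi_1(\ker\varphi_c)=0$ reduces to $\pi_0,\pi_1$ of $K_{1_.}^{ab}\otimes K_{2_.}^{ab}$ vanishing, which is exactly the four conditions on $G_1,G_2$ (this step is quoted from the companion paper). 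So the asymmetry you may have noticed in the hypotheses --- $H_3$ only on the quotient side --- is explained by needing $\pi_2$ on the $L$-side but only $\pi_1$ on the $K$-side.

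Your direct attack on $\ker\theta$ is not just bookkeeping: it runs into terms the hypotheses do not directly control. Already for $c=2$, Theorem~\ref{12}(ii) shows $\pi_1(\ker\theta)$ contains summands such as $M(G_1,N_1)\otimes\frac{N_2}{[N_2,G_2]}$ and $\mathrm{Tor}\big((G_1/N_1)^{ab},\frac{N_2}{[N_2,G_2]}\big)$, built from the relative invariants $N_i/[N_i,G_i]$ and $M^{(1)}(G_i,N_i)$. These are only \emph{subquotients} of the groups appearing in the hypotheses, and vanishing of $A\otimes B$ and $\mathrm{Tor}(A,B)$ does not propagate to arbitrary subgroups (take $A=\mathbb{Z}/p$, $B=\mathbb{Z}[1/p]$, $B'=\mathbb{Z}\le B$: then $A\otimes B=\mathrm{Tor}(A,B)=0$ but $A\otimes B'\neq 0$), so your ``same prime divisors'' heuristic is not available in this generality. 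The paper's sandwich sidesteps this entirely: by keeping the $K$-side and $L$-side apart, one never meets $N_i/[N_i,G_i]$ or $M^{(1)}(G_i,N_i)$, and every K\"unneth term that arises is literally one of the listed hypotheses. Your alternative induction through \eqref{eq} would hit the same mixing problem at the $\pi_1(\ker\beta_c)$ stage.
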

\begin{proof}
Consider the assumption, like the beginning of the section and let $\mathcal{V}$ be variety of nilpotent groups of class at most c, also we note $\varphi_{\gamma_c}$ by $\varphi_c$ and $\psi_{\gamma_c}$ by $\psi_c$ in briefly.

 The commutative diagram (\ref{dia}) implies the following commutative diagram:
\begin{equation}\label{mc}\tiny{\begin{array}{ccccc}
\pi_2(\ker{\psi_c})&\rightarrow& \pi_1(\ker\theta)&\rightarrow&\pi_1(\ker{\varphi_c})
\\  \downarrow &&\downarrow &&\downarrow\\
  \pi_2(\frac{L_{1_.}\ast L_{2_.}}{\gamma_c(L_{1_.}\ast L_{2_.})})&\rightarrow& \pi_1\big(\ker(\frac{\beta}{\gamma_c(\beta)})\big)&\rightarrow& \pi_1(\frac{K_{1_.}\ast K_{2_.}}{\gamma_c(K_{1_.}\ast K_{2_.})}) \\
 \downarrow &&\downarrow &&\downarrow\\
  \pi_2(\frac{L_{1_.}}{\gamma_c(L_{1_.})})\oplus\pi_2(\frac{L_{2_.}}{\gamma_c(L_{2_.})})&\rightarrow&
 \pi_1\big(\ker(\frac{\alpha_{1_.}}{\gamma_c(\alpha_{1_.})})\big)\oplus
 \pi_1\big(\ker(\frac{\alpha_2}{\gamma_c(\alpha_2)})\big) &\rightarrow&
 \pi_1(\frac{K_{1_.}}{\gamma_c(K_{1_.})})\oplus\pi_1(\frac{K_{2_.}}{\gamma_c(K_{2_.})})
     \end{array}}\end{equation}
The assumption
$$G_1^{ab}\otimes G_2^{ab}=M^{(1)}(G_1)\otimes G_2^{ab}=M^{(1)}(G_2)\otimes G_1^{ab}=Tor(G_1^{ab},G_2^{ab})=0$$
implies that $\pi_1(\ker{\varphi_c})$ is a trivial group (See Vasagh, Mirebrahimi and Mashayekhy). Now like Vasagh, Mirebrahimi and Mashayekhy, by induction on $c$ we prove that the other assumptions yield that $\pi_2(\ker(\psi_c))$ is trivial. Note that $\ker{\psi}_c$ satisfies in the following exact sequence
$$1\rightarrow \frac{[L_{1_.},L_{2_.},_{c-2}F]^F}{[L_{1_.},L_{2_.},_{c-1}F]^F} \rightarrow \ker{\psi}_c\rightarrow\ker{\psi}_{c-1}\rightarrow 1,$$
where $F_.=L_{1_.}*L_{2_.}$. Moreover
 $$ \frac{[L_{1_.},L_{2_.},_{c-2}F]^F}{[L_{1_.},L_{2_.},_{c-1}F]^F}\cong\oplus\sum_{\substack{for\ some\ i+j=c}} {\underbrace{L_{1_.}^{ab}\otimes...\otimes L_{1_.}^{ab}}_{i-times}} \otimes\displaystyle{\underbrace{L_{2_.}^{ab}\otimes...\otimes L_{2_.}^{ab}}_{j-times}}.$$
For $c=2$, we show that $\ker\psi_2\cong L_{1_.}^{ab}\otimes L_{2_.}^{ab}$. Theorem \ref{sg} \eqref{m}, \eqref{n} and Kunneth formula imply that $$\pi_0(\ker\psi_2)\cong\pi_0( L_{1_.}^{ab}\otimes L_{2_.}^{ab})\cong (G_1/N_1)^{ab}\otimes (G_2/N_2)^{ab}=0.$$ Similarly $$\begin{array}{ll}\pi_1(\ker\psi_2)&\cong(G_1/N_1)^{ab}\otimes M^{(1)}(G_2/N_2)\\& \oplus  M^{(1)}(G_1/N_1)\otimes (G_2/N_2)^{ab}\\&\oplus Tor\big((G_1/N_1)^{ab},(G_2/N_2)^{ab}\big)=0.\end{array} $$ Also
 $$\begin{array}{ll} \pi_2(\ker\psi_2)&\cong\pi_2( L_{1_.}^{ab}\otimes L_{2_.}^{ab})\\&\cong (G_1/N_1)^{ab}\otimes H_3(G_2/N_2)\\&\oplus (G_2/N_2)^{ab}\otimes H_3(G_1/N_1)\\&\oplus M^{(1)}(G_1/N_1)\otimes (G_2/N_2)^{ab}\\&\oplus M^{(1)}(G_2/N_2)\otimes (G_1/N_1)^{ab}\\& \oplus Tor\big((G_1/N_1)^{ab},M^{(1)}(G_2/N_2)\big)\\&\oplus  Tor\big((G_2/N_2)^{ab},M^{(1)}(G_1/N_1)\big)=0.\end{array} $$

For $c>2$, we have
\[\begin{array}{ll} &\ \ \ \pi_2({\underbrace{L_{1_.}^{ab}\otimes...\otimes L_{1_.}^{ab}}_{i-times}} \otimes\displaystyle{\underbrace{L_{2_.}^{ab}\otimes...\otimes L_{2_.}^{ab}}_{j-times}})\\
&\cong \pi_2(L_{1_.}^{ab}\otimes L_{2_.}^{ab})\otimes  \pi_0({\underbrace{L_{1_.}^{ab}\otimes...\otimes L_{1_.}^{ab}}_{(i-1)-times}} \otimes\displaystyle{\underbrace{L_{2_.}^{ab}\otimes...\otimes L_{2_.}^{ab}}_{(j-1)-times}})\\
 &\oplus \pi_1(_{1_.}^{ab}\otimes L_{2_.}^{ab})\otimes \pi_1({\underbrace{L_{1_.}^{ab}\otimes...\otimes L_{1_.}^{ab}}_{(i-1)-times}} \otimes\displaystyle{\underbrace{L_{2_.}^{ab}\otimes...\otimes L_{2_.}^{ab}}_{(j-1)-times}})\\
 &\oplus \pi_0(L_{1_.}^{ab}\otimes L_{2_.}^{ab})\otimes  \pi_2({\underbrace{L_{1_.}^{ab}\otimes...\otimes L_{1_.}^{ab}}_{(i-1)-times}} \otimes\displaystyle{\underbrace{L_{2_.}^{ab}\otimes...\otimes L_{2_.}^{ab}}_{(j-1)-times}})\\
  &\oplus Tor\big(\pi_0(L_1^{ab}\otimes L_{2_.}^{ab}), \pi_1({\underbrace{L_{1_.}^{ab}\otimes...\otimes L_{1_.}^{ab}}_{(i-1)-times}} \otimes\displaystyle{\underbrace{L_{2_.}^{ab}\otimes...\otimes L_{2_.}^{ab}}_{(j-1)-times}})\big)\\ &\oplus Tor\big(\pi_1(L_1^{ab}\otimes L_{2_.}^{ab}), \pi_0({\underbrace{L_{1_.}^{ab}\otimes...\otimes L_{1_.}^{ab}}_{(i-1)-times}} \otimes\displaystyle{\underbrace{L_{2_.}^{ab}\otimes...\otimes L_{2_.}^{ab}}_{(j-1)-times}})\big)\\
  &\cong 0. \end{array}\]
 Thus $\pi_2(\ker{\psi}_c)=0$ and by \eqref{mc} we have $\pi_1(\ker(\theta))=0$. Hence
 $M^{(c)}(G_1\ast G_2,\langle N_1~\ast~ N_2\rangle^{G_1\ast G_2})\cong M^{(c)}(G_1,N_1)\oplus M^{(c)}(G_2,N_2)$, for all $c\geq 1$.
 \end{proof}
 \begin{cor}\ \ \ \\

 $(i)$ Let $G_1$ and $G_2$ be two finite groups with $\big(|G^{ab}|,|H^{ab}|\big)=1$,
then for all $c\geq1$
$$M^{(c)}(G_1\ast G_2,\langle N_1\ast N_2\rangle^{G_1\ast G_2})\cong M^{(c)}(G_1,N_1)\oplus M^{(c)}(G_2,N_2).$$

$(ii)$ Let $G_1$ and $G_2$ be two perfect groups such that $M^{(1)}({\frac{G_1}{N_1}})\otimes M^{(1)}({\frac{G_2}{N_2}})$ is trivial, then for all $c\geq1$
$$M^{(c)}(G_1\ast G_2,\langle N_1\ast N_2\rangle^{G_1\ast G_2})\cong M^{(c)}(G_1,N_1)\oplus M^{(c)}(G_2,N_2).$$

 \end{cor}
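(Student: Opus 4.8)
The plan is to read both statements off from the preceding theorem: it suffices, in each case, to check that the long list of vanishing conditions in that theorem is automatically satisfied, after which its conclusion is literally the claimed isomorphism, so no new simplicial or homotopy-theoretic argument is required. For part $(i)$ I would proceed arithmetically. Every group occurring in those conditions --- the abelianizations $G_i^{ab}$ and $(G_i/N_i)^{ab}$, the Schur multipliers $M^{(1)}(G_i)$ and $M^{(1)}(G_i/N_i)$, and the homology groups $H_3(G_i/N_i)$ --- is a finite abelian group all of whose prime divisors divide $|G_i|$; for the abelianizations this is obvious, and for the homological terms it is the classical fact that $H_n$ of a finite group $G$ is annihilated by $|G|$ for $n\geq 1$ (so its primes lie among those of $|G|$), together with the fact that $|G_i/N_i|$ divides $|G_i|$. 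The coprimality hypothesis of $(i)$ then forces, for $i\neq j$, the order of every term attached to $G_i$ to be coprime to the order of every term attached to $G_j$, whence every tensor product and every $Tor$ group appearing in the hypotheses of the preceding theorem vanishes. That theorem therefore applies and gives $M^{(c)}(G_1\ast G_2,\langle N_1\ast N_2\rangle^{G_1\ast G_2})\cong M^{(c)}(G_1,N_1)\oplus M^{(c)}(G_2,N_2)$ for all $c\geq 1$.

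For part $(ii)$ I would use that for a perfect group $G$ one has $G'=G$, hence $G^{ab}=1$ and, since $(G/N)^{ab}\cong G/G'N$, also $(G/N)^{ab}=1$; in particular each $G_i/N_i$ is again perfect. Substituting these trivial groups into the hypotheses of the preceding theorem, every condition that contains a factor $G_j^{ab}$ or $(G_j/N_j)^{ab}$ holds automatically, since any tensor product or $Tor$ group with a trivial factor is $0$, and the single condition that survives is $M^{(1)}(G_1/N_1)\otimes M^{(1)}(G_2/N_2)=0$, which is exactly the standing assumption of $(ii)$. Hence the preceding theorem applies and yields the asserted isomorphism.

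The only place where genuine care is needed is the arithmetic in part $(i)$: one has to invoke the standard structural fact bounding the prime divisors of $M^{(1)}(G)=H_2(G)$ and of $H_3(G)$ of a finite group $G$ by those of $|G|$, and to note that passing from $G_i$ to its quotient $G_i/N_i$ introduces no new primes, so that the coprimality built into the hypothesis of $(i)$ really does kill all the mixed tensor and $Tor$ terms. Part $(ii)$ is essentially formal once one records that perfect groups, and their quotients, have trivial abelianization.
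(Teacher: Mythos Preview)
The paper gives no proof of this corollary; it is meant to be read off directly from the preceding theorem, and your strategy of verifying that theorem's list of vanishing hypotheses is exactly what is intended. Part~(ii) is correct: perfection of $G_i$ gives $G_i^{ab}=1$ and hence $(G_i/N_i)^{ab}\cong G_i/G_i'N_i=1$, so every condition containing an abelianization factor vanishes automatically, and the only surviving condition is $M^{(1)}(G_1/N_1)\otimes M^{(1)}(G_2/N_2)=0$, which is precisely the standing assumption.

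In part~(i), however, your arithmetic contains a genuine gap. You correctly bound the primes of $H_n(G_i)$ and $H_n(G_i/N_i)$ by those of $|G_i|$, but the hypothesis as written is $\big(|G_1^{ab}|,|G_2^{ab}|\big)=1$, not $\big(|G_1|,|G_2|\big)=1$, and your sentence ``the coprimality hypothesis of~(i) then forces \ldots'' silently uses the latter. The weaker hypothesis does \emph{not} kill the mixed terms: take $G_1=A_5$ (so $G_1^{ab}=1$ while $M^{(1)}(G_1)\cong\mathbb{Z}/2$) and $G_2=\mathbb{Z}/2$; then $\big(|G_1^{ab}|,|G_2^{ab}|\big)=1$ yet $M^{(1)}(G_1)\otimes G_2^{ab}\cong\mathbb{Z}/2\neq 0$, so the preceding theorem does not apply. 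The abstract of the paper speaks of ``finite groups of mutually coprime orders'', so the intended hypothesis is almost certainly $\big(|G_1|,|G_2|\big)=1$ and the printed statement is a typo; under that corrected reading your argument goes through verbatim. You should flag this discrepancy explicitly rather than let the non sequitur stand.
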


\end{document}